\documentclass[11pt]{amsart}

\usepackage{amsmath,amsthm,amssymb,amssymb, paralist, xspace, graphicx, url, amscd, euscript, mathrsfs,stmaryrd,epic,eepic,color}


\usepackage[all]{xy}
\usepackage{amsmath,amscd}
\SelectTips{cm}{}




\usepackage[colorlinks=true]{hyperref}



\numberwithin{equation}{subsection}

\setcounter{tocdepth}1

\numberwithin{subsection}{section}

\allowdisplaybreaks[1]

%

\newtheorem*{namedtheorem}{\theoremname}
\newcommand{\theoremname}{testing}






\theoremstyle{plain}

\newtheorem{thm}{Theorem}[section]
\newtheorem{proposition}[thm]{Proposition}
\newtheorem{proposition-definition}[thm]{Proposition-Definition}
\newtheorem{lemma-definition}[thm]{Lemma-Definition}
\newtheorem{corollary}[thm]{Corollary}
\newtheorem{lemma}[thm]{Lemma}

\theoremstyle{definition}
\newtheorem{definition}[thm]{Definition}
\newtheorem{notation}[thm]{Notation}

\newtheorem{remark}[thm]{Remark}

\newtheorem{question}[thm]{Question}
 
\newtheorem{construction}[thm]{Construction}

\newtheorem{obs}[thm]{Observation}

\theoremstyle{remark}

\numberwithin{thm}{section}




\newcommand\ocM{\overline{\mathcal{M}}}


\newcommand\cC{\mathcal{C}}

\newcommand\cL{\mathcal{L}}
\newcommand\cM{\mathcal{M}}

\newcommand\cO{\mathcal{O}}

\newcommand\cT{\mathcal{T}}

\newcommand\cX{\mathcal{X}}

\newcommand\E{\mathcal{E}}
\newcommand\K{\mathcal{K}}
\newcommand\N{\mathcal{N}}
\newcommand\T{\mathcal{T}}
\def\O{\mathcal{O}}

\def\P{\mathbb{P}}
\def\A{\mathbb{A}}

\def\N{\mathbb{N}}
\def\F{\mathcal{F}}

\newcommand\uf{\underline{f}}

\renewcommand\AA{\mathbb{A}}

\newcommand\NN{\mathbb{N}}
\newcommand\PP{\mathbb{P}}

\newcommand\fK{\mathfrak{K}}

\newcommand\fM{\mathfrak{M}}


\newcommand\kk{\mathbf{k}}  

\newcommand\arr{\ifinner\to\else\longrightarrow\fi}

\def\displaytimes_#1{\mathrel{\mathop{\times}\limits_{#1}}}

\def\displayotimes_#1{\mathrel{\mathop{\bigotimes}\limits_{#1}}}

\renewcommand\hom{\operatorname{\underline {Hom}}}

\newcommand\ext{\operatorname{Ext}}

\newcommand\spec{\operatorname{Spec}}

\newcommand{\Tor}{\operatorname{Tor}}

\newcommand\ch{\operatorname{char}}


\newdir{ >}{{}*!/-5pt/@{>}}

\newcommand\doublelong[2]{\mathbin{\xymatrix{{}\ar@<3pt>[r]^{#1}
\ar@<-3pt>[r]_{#2}&}}}

\newlength{\ignora}




\renewcommand{\setminus}{\smallsetminus}

\newcommand{\Hom}{\underline {\text{\rm Hom}}}

\numberwithin{equation}{subsection}

\newcommand{\Jac}{\operatorname{Jac}}

\newcommand{\diag}{\operatorname{diag}}


\newcommand{\lX}{X^{\dagger}}
\newcommand{\lY}{{Y^{\dagger}}}
\newcommand{\lC}{C^{\dagger}}
\newcommand{\lcX}{{\cX^{\dagger}}}
\newcommand{\lB}{{B^{\dagger}}}
\newcommand{\lS}{{S^{\dagger}}}


\begin{document}

\title{Very Free Curves on Fano Complete Intersections}

\author{Qile Chen}

\author{Yi Zhu}

\thanks{Chen is supported by the Simons foundation.}

\address[Chen]{Department of Mathematics\\
Columbia University\\
Rm 628, MC 4421\\
2990 Broadway\\
New York, NY 10027\\
U.S.A.}
\email{q\_chen@math.columbia.edu}

\address[Zhu]{Department of Mathematics\\
University of Utah\\
Room 233\\
155 S 1400 E \\
Salt Lake City, UT 84112\\
U.S.A.}
\email{yzhu@math.utah.edu}

\date{\today}
\begin{abstract}
In this paper, we show that general Fano complete intersections over an algebraically closed field of arbitrary characteristics are separably rationally connected. Our construction of rational curves leads to a more interesting generalization  that general log Fano complete intersections with smooth tame boundary divisors admit very free $\AA^1$-curves.
\end{abstract}
\maketitle

\tableofcontents

\section{Introduction}


\subsection{The back ground and main results}

The existence of rational curves in higher dimensional varieties greatly shapes the geometry. The following definitions describe the existence of large amount of rational curves:

\begin{definition}[\cite{Kollar} IV.3]\label{def:rc}
Let $X$ be a variety defined over an arbitrary field $k$. 
\begin{enumerate}
\item A variety $X$ is \emph{rationally connected} (RC) if there is a family of irreducible proper rational curves $g: U\rightarrow Y$ and the cycle morphism $u:U\rightarrow X$ such that the morphism $u^{(2)}:U\times_Y U\rightarrow X\times X$ is dominant.

\item A variety $X$ is \emph{rationally chain connected} (RCC) if there is a family of chains of rational curves $g: V\rightarrow Y$ and the cycle morphism $u:V\rightarrow X$ such that the morphism $u^{(2)}:V\times_Y V\rightarrow X\times X$ is dominant.

\item A variety $X$ is \emph{separably rationally connected} (SRC) if over the algebraic closure $\overline{k}$, there exists a proper rational curve $f:\P^1\rightarrow X $ such that $X$ is smooth along the image and $f^*T_X$ is ample.
Such rational curves are called \emph{very free} curves.
\end{enumerate}
\end{definition}

We refer to Koll\'ar's book \cite{Kollar} for the background. The third definition is stronger than \cite[IV3.2.3]{Kollar}.  But they coincide for smooth varieties. It is known that SRC implies RC, and RC implies RCC. All the notations of rational connectedness are equivalent for smooth varieties in characteristic zero. However in positive characteristics, it is known that RC is strictly weaker than SRC.

The fundamental results of Campana \cite{Campana} and Koll\'ar, Miyaoka and Mori \cite{KMM} show that Fano varieties, i.e., smooth varieties with ample anticanonical bundles are rationally chain connected. In particular, Fano varieties are SRC in characteristic zero. 

It has been pointed out by Koll\'ar that separable rational connectedness is the right notion for rational connectedness in arbitrary characteristics. They have both nice geometric and arithmetic applications as follows.

Geometrically, when the base field is algebraically closed, \begin{enumerate}
\item Graber-Harris-Starr \cite{GHS} proves the famous theorem asserting that over characteristic zero, a proper family of varieties over an algebraic curve whose general fiber is smooth RC admits a section;

\item de Jong and Starr in \cite{dJS1} generalize the result of \cite{GHS} by showing the existence of sections over positive characteristics when general fibers are smooth SRC. 

\item The weak approximation for families of separably rationally connected varieties was studied by \cite{HT06}, \cite{HT08}.

\item Tian and Zong \cite{Tian-Zong} show that the Chow group of $1$-cycles on a smooth proper SRC variety is generated by rational curves. 
\end{enumerate}

Arithmetically, we have: 
\begin{enumerate}
\item when the base field is a local field, Koll\'ar \cite{Kollar-local} shows that a smooth proper SRC variety admits a very free curve through any rational point;

\item when the base field is a finite field of cardinality $q \leq \infty$, Koll\'ar and Szab\'o \cite{Kollar-Szabo} show that there is a function $\Phi:\N^3\to N$ such that for a smooth projective SRC variety $X \subset \PP^N$, given any zero dimensional subscheme $S\subset X$, there exists a smooth rational curve on $X$ containing $S$ whenever $q>\Phi(\deg X,\dim X, \deg S)$;

\item when the base field is a large field, Hu \cite{HuYong} proves interesting results on the weak approximation conjecture for SRC varieties at places of good reductions.
\end{enumerate}

Despite the nice behavior of SRC varieties, many important varieties which are known to be RC over characteristic zero, are difficult to verify the SRC condition in positive characteristics. The following question is the major motivation of the present paper:

\begin{question}[Koll\'ar] \label{q:K}
In arbitrary characteristic, is every smooth Fano variety separably rationally connected?
\end{question}

\begin{notation}
Since Question \ref{q:K} can be checked over the algebraic closure, for the rest of this paper, we work with algebraic varieties over an algebraically closed field $\kk$ of arbitrary characteristics.  
\end{notation}

The first testing example is Fano complete intersections in projective spaces. The difficulty is to prove separable rational connectedness in low characteristics \cite[Conjecture 14]{Kollar-Szabo}. 

The question is known for general Fano hypersurfaces by \cite{Zhu}, where very free curves are constructed explicitly over degenerate Fano varieties. In this paper, we provide an answer in the complete intersection case:

\begin{thm}\label{thm:main-fano}
Over arbitrary characteristics, a general Fano complete intersection in $\PP^n$ is separably rationally connected. 
\end{thm}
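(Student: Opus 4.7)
The plan is to deduce Theorem~\ref{thm:main-fano} from the stronger log-geometric statement in the abstract, that general log Fano complete intersections $(X,D)$ with smooth tame boundary divisor $D$ admit very free $\AA^1$-curves, and to prove the latter by induction on the number $c$ of hypersurfaces cutting out $X$. Given such an $\AA^1$-curve $f\colon\PP^1\to X$ with $f^{-1}(D)=\{\infty\}$ (transverse), the exact sequence
\[
0\to T_X(-\log D)\to T_X\to N_{D/X}\to 0
\]
pulls back along $f$ to an elementary upward modification of vector bundles on $\PP^1$; since such a modification preserves ampleness, $f^*T_X(-\log D)$ ample forces $f^*T_X$ ample. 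A general Fano $X$ is realised as the interior of a log Fano pair by taking $D$ to be a general hyperplane section when $\sum d_i<n$, and by a separate degeneration argument when $\sum d_i=n$.

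The base case $c=1$ is a log-enhanced version of the hypersurface theorem of [Zhu], obtained by tracking the intersection of Zhu's explicit degenerate very free curves with a general smooth tame divisor. For the inductive step, given a general log Fano complete intersection $(X,D)$ of type $(d_1,\ldots,d_c)$, I would write $X=Y\cap H$ with $Y$ a complete intersection of type $(d_1,\ldots,d_{c-1})$ and $H$ a hypersurface of degree $d_c$. The degree count $-(K_Y+X)=(n+1-\sum d_i)H|_Y$ shows that the pair $(Y,X)$ is log Fano exactly when $X$ is Fano, and $Y$ has one fewer hypersurface. Applying the inductive hypothesis to $Y$ equipped with a suitable smooth tame boundary containing $X$ yields very free $\AA^1$-curves on $Y$ with controlled contact order along $X$. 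The core construction specialises such a curve so that its limit carries a component lying inside $X$, then extracts the desired very free $\AA^1$-curve on $(X,D)$ from that component via a logarithmic comb-smoothing.

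The hardest step is this log smoothing in arbitrary characteristic. Standard comb-smoothing arguments can be obstructed in low characteristic by wild ramification along the boundary and by failure of $H^1$-vanishing for the log tangent sheaf; the tame boundary hypothesis is introduced precisely to rule these out, ensuring that the log cotangent complex behaves well under the base changes and pullbacks appearing in the gluing. Propagating tameness through each inductive step---choosing new boundaries that meet all glued curves transversally and with ramification coprime to the characteristic---is the central technical bookkeeping, and is what makes the log-geometric framework essential for the proof in positive characteristic.
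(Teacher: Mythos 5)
Your reduction of the Fano statement to a log statement is sound as far as it goes: the quotient of $f^*T_X$ by $f^*T_{X^\dagger}$ is torsion when $f$ meets $D$ transversally, so a very free $\AA^1$-curve does give a very free curve on the interior. The genuine gap is in your inductive step. You propose to view $X$ as a divisor in the lower-codimension complete intersection $Y$, produce very free $\AA^1$-curves on the pair $(Y,X)$ (which by definition meet $X$ only at the single marked point), specialise so that a component of the limit lies inside $X$, and then ``extract'' a very free $\AA^1$-curve on $(X,D)$ from that component. Ampleness of the pulled-back log tangent bundle is an open condition in such a specialisation, not a closed one: there is no control whatsoever on the restriction of $T_{Y^\dagger}$ (let alone of $T_X$ or $T_{X^\dagger}$) to the component of the limit that falls into $X$, and the proposed logarithmic comb-smoothing inside $X$ would require free curves on $X$ as teeth --- which is circular, since free curves on $X$ are exactly what the induction is supposed to produce. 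Two further problems with this decomposition: the boundary of the pair $(Y,X)$ has degree $d_c$, which is dictated by the problem and cannot be chosen prime to $\ch\kk$, so the tameness you lean on cannot be ``propagated''; and your passage from the Fano case to the log case via a general hyperplane section fails for index-one Fanos ($\sum d_i=n$), since then $1+\sum d_i>n$ and the pair is no longer log Fano.

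The paper's route avoids all of this by arranging for positivity to flow \emph{from} the divisor \emph{up to} the ambient log pair, never downward. It degenerates $X$ of type $(d_1,\dots,d_l)$ into $X_1\cup_D X_2$, where $(X_1,D)$ has boundary of degree $1$ (hence automatically tame in every characteristic) and $(X_2,D)$ has boundary of degree $d_l-1$; the general fibre is SRC once $(X_1,D)$ is separably $\AA^1$-connected and $(X_2,D)$ is separably $\AA^1$-uniruled. Both of these are deduced from SRC (resp.\ separable uniruledness) of $D$ itself --- a Fano complete intersection of type $(d_1,\dots,d_{l-1},d_l-1)$ in $\PP^{n-1}$ --- using the sequence $0\to\cO_D\to T_{X_1^\dagger}|_D\to T_{D}\to 0$, an explicitly constructed free $\AA^1$-line attached at a general point, and an elementary-transform argument at the node to upgrade semipositivity to ampleness. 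The induction is therefore on dimension and total degree, terminating at projective space, with no step that asks a positive curve on a divisor to be produced from a positive curve on the ambient space.
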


\begin{remark}
During the preparation of this paper, the authors learned another interesting proof of of Theorem \ref{thm:main-fano} by Zhiyu Tian \cite{Tian-13} using a different method. 
\end{remark}

Our theorem eliminates the SRC condition in \cite[Theorem 1.7]{Tian-Zong}.

\begin{corollary}
Let $X$ be a general complete intersection of type $(d_1,\cdots,d_l)$ in $\P^n$ such that $d_1+\cdots+d_l\le n-1$. Then the Chow group of $1$-cycles on $X$ is generated by lines. 
\end{corollary}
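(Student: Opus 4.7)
The plan is to deduce this corollary by combining Theorem \ref{thm:main-fano} with \cite[Theorem 1.7]{Tian-Zong}, which asserts that the Chow group of $1$-cycles on a smooth separably rationally connected Fano complete intersection $X \subset \PP^n$ satisfying $\sum d_i \le n-1$ is generated by classes of lines. Thus the only task is to verify the hypotheses of \cite[Theorem 1.7]{Tian-Zong} for a general such $X$.

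First, the numerical condition $d_1 + \cdots + d_l \le n-1$ implies in particular $\sum d_i \le n$, so $X$ is Fano (of index at least $2$), and for a general choice of defining polynomials $X$ is smooth. Applying Theorem \ref{thm:main-fano} to $X$ then supplies the separable rational connectedness in arbitrary characteristic, which is precisely the ingredient that was previously unavailable in low characteristic and that motivated stating the result of \cite{Tian-Zong} as a conditional one. In addition, a standard Schubert calculation on $G(2,n+1)$ shows that the Fano scheme of lines $F_1(X)$ has expected dimension $2n - 2 - l - \sum d_i$, and the sharper bound $\sum d_i \le n-1$ is exactly what forces this to be at least $\dim X - 1 = n - l - 1$, so that for general $X$ the union of lines on $X$ sweeps out every point; this geometric fact underlies the reduction in \cite[Theorem 1.7]{Tian-Zong} from arbitrary $1$-cycles to lines.

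With both hypotheses (smooth SRC, plus the numerical bound guaranteeing line-coverage) verified, the cited theorem applies directly and gives the conclusion. I expect no substantive new obstacle in the proof of the corollary itself: the entire content is that Theorem \ref{thm:main-fano} unconditionally produces the SRC input to \cite[Theorem 1.7]{Tian-Zong}, with the mathematical effort concentrated in the proof of Theorem \ref{thm:main-fano} rather than in the deduction presented here.
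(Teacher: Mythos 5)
Your proposal is correct and matches the paper's (implicit) argument exactly: the paper derives the corollary simply by noting that Theorem \ref{thm:main-fano} removes the SRC hypothesis from \cite[Theorem 1.7]{Tian-Zong}, which is precisely your deduction. Your additional remarks on the Fano scheme of lines are accurate but not needed, since that geometry is already internal to the cited theorem.
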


Surprisingly, our construction of very free curves leads to a much stronger generalization of Theorem \ref{thm:main-fano}:

\begin{thm}\label{thm:main-log-fano}
Over arbitrary characteristics, a general log Fano complete intersection with a general tame boundary in $\PP^n$ is separably $\A^1$-connected. 
\end{thm}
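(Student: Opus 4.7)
The plan is to extend the construction used to prove Theorem \ref{thm:main-fano} to the logarithmic setting. Write the pair $(X,D)$ as $X\subset \PP^n$ a general complete intersection of type $(d_1,\dots,d_l)$ and $D=X\cap H$ for a general hypersurface $H$ of degree $d_{l+1}$, with the tameness hypothesis meaning $\ch \kk \nmid d_{l+1}$ so that $D$ is smooth. The log Fano condition amounts to $d_1+\cdots+d_l+d_{l+1}\le n$. A separably $\AA^1$-connected pair requires a very free $\AA^1$-curve, i.e.\ a log morphism $f:(\PP^1,\infty)\to (X,D)$ sending $\infty$ to $D$ with all contact concentrated at $\infty$, such that the log tangent bundle $f^*T_{X}(-\log D)$ is ample on $\PP^1$.

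The first step is to reinterpret a tame boundary via a cyclic cover: the tameness hypothesis guarantees that the root stack $\sqrt[d_{l+1}]{D/X}$ is a smooth Deligne--Mumford stack, and an $\AA^1$-curve on $(X,D)$ corresponds, essentially, to a twisted rational curve into this stack meeting the gerbe over $D$ at a single stacky point. The problem thus becomes constructing a very free twisted rational curve on an appropriate stacky Fano complete intersection. I would therefore try to reduce Theorem \ref{thm:main-log-fano} to a stacky version of Theorem \ref{thm:main-fano}, where the extra degree $d_{l+1}$ equation plays the role of one of the defining equations of the complete intersection, but taken up to an orbifold structure at the boundary.

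The second step is to degenerate. Mirroring the strategy of Theorem \ref{thm:main-fano}, degenerate $(X,D)$ to a controlled singular pair $(X_0,D_0)$ for which one can write down an explicit curve by hand. A natural choice is to let $H$ degenerate so that $D_0$ becomes a union of low degree pieces meeting the singular locus of $X_0$ cleanly, and then build a log stable curve on $(X_0,D_0)$ whose dual graph is a comb: a spine that is a very free curve in the smooth locus of $X_0\smallsetminus D_0$ (supplied by the proof of Theorem \ref{thm:main-fano}), with one distinguished teeth meeting $D_0$ transversally at one point and playing the role of the marked point $\infty$, plus extra free teeth ensuring the log tangent bundle is globally generated with enough room. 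This is where log deformation theory of stable log maps enters: one checks that the pulled back log tangent bundle on each component is non-negative, with strictly positive total degree, and that the node-matching obstructions vanish.

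The main obstacle will be controlling the log tangent bundle positivity across the boundary in low characteristic. The very free curve on $X$ that Theorem \ref{thm:main-fano} supplies only sees $T_X$; one has to bend it so that the added tangency condition at $\infty$ does not cost more than one unit of positivity in each factor, which forces a careful Mayer--Vietoris--type argument comparing $f^*T_X$ and $f^*T_X(-\log D)$ via the residue exact sequence
\[
0\longrightarrow f^*T_X(-\log D)\longrightarrow f^*T_X \longrightarrow f^*\cO_D(D)\longrightarrow 0.
\]
The tameness hypothesis is exactly what is needed to keep this sequence exact and the connecting Ext groups under control. Once positivity is verified on the central fiber, smoothing of the log stable map (and smoothing of the ambient pair back to a general $(X,D)$) gives the very free $\AA^1$-curve on a general log Fano complete intersection, and the Koll\'ar-type openness of the locus with a very free $\AA^1$-curve inside the moduli of pairs completes the proof.
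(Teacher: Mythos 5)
There is a genuine gap at the heart of your step 2, and it is exactly the difficulty the paper is organized around. You propose a comb whose spine is a very free curve lying in $X_0\setminus D_0$ (supplied by Theorem \ref{thm:main-fano}), with a single tooth carrying the marked point on $D_0$. But $D$ (and any reasonable degeneration $D_0$ of it) is an \emph{ample} divisor on $X$, so every rational curve of positive degree meets it; there is no very free curve of $X$ contained in $X\setminus D$. Consequently your spine acquires $\deg_{C}D$ unwanted intersection points with the boundary, each of which must be a marked point or node of the log stable map, and a log deformation cannot concentrate this contact at the single marking $\infty$ --- the number of markings and their contact orders are discrete invariants. This is precisely why the paper warns that producing an $\AA^1$-curve is much harder than producing a rational curve. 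Your residue sequence also quantifies the damage: $f^*T_{X}(-\log D)$ is a negative elementary transformation of $f^*T_X$ at every point of $f^{-1}(D)$, so for a curve meeting $D$ in many points the log tangent bundle loses far more than ``one unit of positivity''; and note that sequence is exact for any smooth pair, independent of tameness, so tameness is not doing the work you assign to it. The root-stack reformulation in your step 1 is a legitimate alternative language for tame contact orders, but it does not address this core obstruction and is not developed into an actual construction.

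The paper's route avoids the problem by putting the very free spine \emph{inside} the boundary: since $D$ is itself a general Fano complete intersection of lower dimension, it is SRC by the (joint, dimension-decreasing) induction, so one takes a very free curve $C_1\subset D$ and attaches to it the single explicit free $\AA^1$-line of Proposition \ref{prop:line} (whose existence is where $\ch\kk\nmid d_b$ is actually used). Components mapping into $D$ are handled by the expanded-target gluing of Lemmas \ref{lem:relative-smooth-out} and \ref{lem:relative-gluing}, producing a log stable map with exactly one marking. The remaining issue --- that $T_{\lX}|_{C_1}$ may only be a trivial extension of $T_D|_{C_1}$ by $\cO_{C_1}$ (Lemma \ref{lem:divisor-ambient}), hence merely semipositive --- is resolved by the elementary-transformation argument of Section 4.3 (Proposition \ref{prop:positivity}), using the positive directions contributed by the line at the node together with Lemmas \ref{lem:tame-non-vanishing} and \ref{lem:zero-projection}. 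If you want to salvage your approach you would need to replace your spine by one lying in $D$, at which point you are reconstructing the paper's proof.
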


We refer to Section \ref{ss:log-pair}, and \ref{ss:log-rc} for more details of the definitions and the proof of the above results. 

Note that the $\AA^1$-connectedness implies rationally connectedness of the underlying variety. However, the other direction fails in general. In fact, a log Fano log variety needs not to be $\AA^1$-connected! For example the log variety associated to $\PP^2$ with boundary given by two distinct lines fails to be $\AA^1$-connected. On the other hand, producing an $\AA^1$-curve is much more difficult than producing rational curves due to the constrain on the boundary marking. Thus, Theorem \ref{thm:main-log-fano} turns out to be a more interesting result.

\begin{question}
Can we drop the tame condition on the boundary in Theorem \ref{thm:main-log-fano}? 
\end{question}

Our construction of very free log maps provides a much more general result over characteristic zero:

\begin{thm}\label{thm:main-log-fano-char-zero}
Assume $\ch \kk = 0$. Let $X$ be a log Fano smooth pair,  i.e., $-(K_X+D)$ is ample, with a smooth irreducible boundary divisor $D$. Then $X$ is separably $\AA^1$-connected if and only if it is separably $\AA^1$-uniruled.
\end{thm}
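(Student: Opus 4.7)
The plan is as follows. The ``only if'' direction is immediate, since any family witnessing separable $\AA^1$-connectedness also dominates $X\setminus D$ and thus witnesses separable $\AA^1$-uniruledness. The content lies in the converse, and the strategy is a logarithmic analogue of the comb construction of Koll\'ar--Miyaoka--Mori \cite{KMM}, adapted to the log stable map framework used throughout this paper.

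Starting from separable $\AA^1$-uniruledness, I would first produce a very free $\AA^1$-curve $f_0\colon \AA^1\to X$ meeting $D$ transversely at $\infty$. Separable $\AA^1$-uniruledness provides a free $\AA^1$-curve through a general point; in characteristic zero, bend-and-break inside the stack of stable log maps upgrades this to a very free $\AA^1$-curve, mirroring the classical passage from uniruled to free to very free in the non-logarithmic setting.

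Next, given two general points $p,q\in X\setminus D$, I would attach deformed copies $f_p, f_q$ of the very free $\AA^1$-curve, passing through $p$ and $q$ respectively, as teeth on a handle $R\subset X$ to form a log comb $C$, and then log-smooth $C$ to a single very free $\AA^1$-curve through both $p$ and $q$. The cohomological heart of this step is the vanishing $H^1(C, \phi^*T_X(-\log D)\otimes \cI)=0$, where $\phi\colon C\to X$ is the comb map and $\cI$ is the ideal of the single marking destined to map to $D$; this vanishing controls both smoothability of $C$ and rigidity of the contact profile with $D$. On each tooth the vanishing follows from very freeness of $f_p, f_q$; on the handle $R$, ampleness of $-(K_X+D)$, combined with the option to attach auxiliary very free teeth, forces enough positivity of $\phi^*T_X(-\log D)|_R$. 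A smoothing of $C$ preserving exactly one marked point over $D$ then yields the desired $\AA^1$-curve through $p$ and $q$, and varying $(p,q)$ produces an $\AA^1$-connecting family. Separability comes for free in characteristic zero by generic smoothness of the two-point evaluation morphism.

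The main obstacle I expect is ensuring that the log smoothing of $C$ preserves the contact profile with $D$, so that the smoothed curve is genuinely an $\AA^1$-curve rather than a proper rational curve carrying extra contacts with the boundary. This requires setting up the log structure on $C$ so that the single contact of each tooth with $D$ at $\infty$ is recorded as the unique marked point, and that no additional contact is forced on the handle $R$; one must then argue within the logarithmic deformation theory that this contact profile is stable under smoothing. The second delicate point is the positivity of $\phi^*T_X(-\log D)|_R$ on the handle, which must be arranged by attaching sufficiently many auxiliary teeth and exploiting the log Fano hypothesis, and this is precisely where both hypotheses of the theorem enter in an essential way.
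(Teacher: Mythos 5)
Your proposal has two genuine gaps, and the second is fatal to the strategy as written. First, the step asserting that ``bend-and-break inside the stack of stable log maps upgrades'' a free $\AA^1$-curve to a very free one is not a known passage and is in fact the entire content of the theorem: already in the non-logarithmic setting, uniruledness (existence of a free curve) does not imply rational connectedness (existence of a very free curve) --- a ruled surface over a positive-genus curve is the standard counterexample --- so no bend-and-break argument can perform this upgrade without injecting the log Fano hypothesis at precisely this point. Your outline defers the log Fano hypothesis to the later comb-smoothing step, by which time the conclusion has already been assumed.

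Second, the comb construction itself cannot produce an $\AA^1$-curve. Each tooth $f_p$, $f_q$ (and each auxiliary tooth) carries its own contact point with $D$, so the comb $\phi\colon C\to X$ meets $D$ at as many points as there are teeth; a general smoothing preserves this total contact profile, and the resulting rational curve carries several boundary markings rather than the single one required by Definition \ref{def:log-rc}. You correctly flag this as ``the main obstacle I expect,'' but offer no mechanism to resolve it, and there is none within the naive comb framework. The paper's proof circumvents both problems at once by a different gluing: by adjunction $-(K_X+D)|_D=-K_D$ is ample, so $D$ is Fano and hence separably rationally connected in characteristic zero; one takes a very free rational curve $\uf\colon C_1\to D$ lying \emph{inside} the boundary, glues it via Lemma \ref{lem:relative-gluing} to a single free $\AA^1$-curve with sufficiently large intersection number with $D$, and obtains a log map with exactly one marking whose contact order is the sum of the two contributions. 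Ampleness of $T_{\lX}$ on a general smoothing is then extracted from the elementary-transform computation of Proposition \ref{prop:positivity}, exactly as in the proof of Proposition \ref{prop:log-to-usual}. The missing idea in your approach is this use of the boundary divisor $D$ as the carrier of the positivity.
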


\begin{corollary}\label{cor}
Let $X$ be a log Fano smooth pair as in Theorem \ref{thm:main-log-fano-char-zero}, and $\ch \kk = 0$. If the divisor class of the normal bundle of $D$ is numerically equivalent to a nontrivial effective divisor, then $(X,D)$ is $\A^1$-connected.
\end{corollary}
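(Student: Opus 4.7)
\textbf{Proof plan for Corollary \ref{cor}.} By Theorem \ref{thm:main-log-fano-char-zero}, it suffices to exhibit a single very free $\A^1$-curve on the log pair $(X,D)$, since this implies separable $\A^1$-uniruledness, which in characteristic zero is equivalent to separable $\A^1$-connectedness. The plan is to produce this curve by smoothing a rational curve contained in $D$ off of the boundary, using the effectivity hypothesis on $N_{D/X}$ to supply the requisite positivity.

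First I would observe that $D$ is itself Fano: by adjunction, $-K_D = -(K_X+D)|_D$ is ample. Since $\ch \kk = 0$, the Campana--Koll\'ar--Miyaoka--Mori theorem furnishes very free rational curves on $D$. Let $E$ be a nontrivial effective divisor on $D$ numerically equivalent to $c_1(N_{D/X})$. Since very free curves sweep out a dense open of $D$ and can be forced through any prescribed point, I would choose $f_0\colon \PP^1\to D$ very free and meeting $E$ transversally at a smooth point $q$; this gives
\[
k \;=\; \deg f_0^{*} N_{D/X} \;=\; f_0(\PP^1)\cdot E \;\geq\; 1.
\]

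Next I would analyze deformations of $f_0$ regarded as a map to $X$. The short exact sequence
\[
0 \;\to\; N_{f_0/D} \;\to\; N_{f_0/X} \;\to\; f_0^{*}N_{D/X} \;\to\; 0
\]
has ample left term (because $f_0$ is very free in $D$) and $f_0^{*}N_{D/X}\simeq\cO_{\PP^1}(k)$ with $k\geq 1$. Hence $N_{f_0/X}$ is globally generated with $H^{1}=0$, so deformations of $f_0$ in $X$ are unobstructed, and the projection $H^{0}(N_{f_0/X})\to H^{0}(\cO_{\PP^1}(k))$ is surjective. Picking a section of $\cO_{\PP^1}(k)$ with divisor $k\cdot\infty$ for a chosen point $\infty\in\PP^1$, I would lift it to $H^{0}(N_{f_0/X})$ and integrate the corresponding first-order deformation (either via the unobstructedness of the log deformation problem, or via an explicit one-parameter smoothing in the log Kontsevich moduli space). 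The result is a family $f_t\colon(\PP^1,\infty)\to(X,D)$ of log maps of contact order $k$ at $\infty$, with the generic member not contained in $D$ and meeting $D$ only at $f_t(\infty)$; that is, a genuine $\A^1$-curve.

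Finally, I would verify that the resulting $f_t$ is very free as a log map, meaning $f_t^{*}T_X(-\log D)$ is ample. Using the sequence
\[
0 \;\to\; T_D \;\to\; T_X(-\log D)\big|_D \;\to\; \cO_D \;\to\; 0
\]
and semicontinuity along the smoothing family, this reduces to positivity already secured by $f_0^{*}T_D$ being ample and the contact order at $\infty$ being $k\geq 1$, which twists the trivial quotient into a bundle of positive degree along $f_t$. The main obstacle in the plan is the smoothing step: moving $f_0$ off of $D$ with prescribed tangency is genuinely a log-geometric construction, and the delicate point is to guarantee that the integrated family has the correct contact profile at $\infty$ rather than breaking into several transverse intersections. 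This is handled by carrying out the deformation inside the log moduli stack of $(X,D)$ with marking of contact order $k$, where the relevant obstruction group $H^{1}(f_0^{*}T_X(-\log D))$ vanishes by the same positivity computation.
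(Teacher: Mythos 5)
Your first half reproduces the paper's argument: by adjunction $D$ is Fano, hence carries very free rational curves in characteristic zero, and forcing one through a point of the effective divisor $E$ (numerically equivalent to $N_{D/X}$) without lying in $E$ gives $\deg f_0^*N_{D/X}=k>0$; lifting $f_0$ to a log map with a single marking of contact order $k$ is Lemma \ref{lem:relative-smooth-out}, and the sequence of Lemma \ref{lem:divisor-ambient} shows $f_0^*T_{\lX}$ is semi-positive, so you have produced a \emph{free} $\AA^1$-curve and $(X,D)$ is separably $\AA^1$-uniruled. At that point you should invoke Theorem \ref{thm:main-log-fano-char-zero} and stop --- this is exactly the paper's route through Lemma \ref{lem:log-uniruled}. (Note also that your opening sentence is circular: a very free $\AA^1$-curve already gives $\AA^1$-connectedness by Definition \ref{def:log-rc}, so if you could produce one directly you would not need the theorem at all; the theorem is invoked precisely because only a free curve is easy to produce.)

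The genuine gap is in your final paragraph, where you assert that the general smoothing $f_t$ is very free. The relevant sequence from Lemma \ref{lem:divisor-ambient} is $0\to\cO_{C}\to f_0^*T_{\lX}\to f_0^*T_D\to 0$ (with $\cO_C$ the sub and $T_D$ the quotient, the opposite of what you wrote). If this extension splits, $f_0^*T_{\lX}$ has a trivial summand, so $h^0(f_0^*\Omega_{\lX})=1$, and upper semicontinuity only yields $h^0(f_t^*\Omega_{\lX})\le 1$, which does not force ampleness of $f_t^*T_{\lX}$. The contact order $k$ at $\infty$ does not ``twist the trivial quotient into positive degree'': the total degree $\deg f_t^*T_{\lX}=\deg f_0^*T_{\lX}=\deg f_0^*T_D$ is conserved under the deformation, and what is uncontrolled is exactly the positivity of the individual summands. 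This is the dichotomy the paper confronts explicitly in the proof of Proposition \ref{prop:log-to-usual}: in the non-split case a general smoothing is indeed very free, but in the split case one must attach a free $\AA^1$-curve whose tangent direction at the node is transverse to $T_D$ (Lemmas \ref{lem:tame-non-vanishing} and \ref{lem:zero-projection}) and run the elementary-transform argument of Proposition \ref{prop:positivity} to gain positivity in the $\cO$-direction. Since the Corollary only needs $\AA^1$-uniruledness together with Theorem \ref{thm:main-log-fano-char-zero}, the cleanest repair is to delete the very-freeness claim; if you insist on exhibiting a very free $\AA^1$-curve directly, you must reproduce that gluing and elementary-transform analysis.
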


The proof of Theorem \ref{thm:main-log-fano-char-zero} and Corollary \ref{cor} will be given in the end of Section \ref{ss:log-fano}. 

The condition in above corollary is first stated in the work of Hassett-Tschinkel \cite{HT-log-Fano}. The $\A^1$-uniruledness condition seems to be a more natural setting --- it includes for example $(\P^1, \{\infty\})$, or Hirzebruch surfaces with the negative curve as the boundary, which are $\A^1$-uniruled, but the divisor class of the normal bundle of the boundary divisor is either trivial or non-effective.

Keel-McKernan \cite{KM} prove that any log Fano pair over complex numbers is either $\A^1$-uniruled or uniruled. It is natural to ask:

\begin{question}
Let $(X,D)$ be a log smooth log Fano variety with $D$ irreducible. Is the pair $(X,D)$ always $\A^1$-uniruled?
\end{question}

\begin{remark}
It should be emphasized that our proof of Theorem \ref{thm:main-fano} and \ref{thm:main-log-fano} is constructive, which allows one to write down the exact degree of the very free curves in each case. We leave the details of this to interested readers.

On the other hand, it seems to us that $\AA^1$-connectedness itself is a very useful conception for the study of quasi-projective varieties. The results of Theorem \ref{thm:main-log-fano} and Theorem \ref{thm:main-log-fano-char-zero} provide a lot of interesting and concrete examples for $\AA^1$-connectedness. In our subsequent paper \cite{A1-connected}, we will study the properties of $\AA^1$-connectedness for general log smooth varieties, and following the work of \cite{HT-log-Fano}, an application to Zariski density of integral points over function field of curves will be considered.
\end{remark}

We next summarize the ideas used in the proof of Theorem \ref{thm:main-fano} and \ref{thm:main-log-fano}.

\subsection{Log Fano complete intersections}\label{ss:log-pair}

Let $(X,D)$ be a smooth pair consisting of a smooth variety $X$ and a smooth divisor $D \subset X$. This defines the {\em divisorial log structure} on $X$:
\begin{equation}\label{equ:div-log}
\cM_{X} := \{s \in \cO_{X} \ | \ s|_{X \setminus D} \in \cO^*\}.
\end{equation}
Let $\lX = (X, \cM_{X})$ be the log scheme defined by the smooth pair $(X,D)$. In this case, $\lX$ is log smooth.  We refer to \cite{KKato} for the basic terminologies of logarithmic geometry. When there is no danger of confusion, we may use $(X,D)$ for the log scheme $\lX$ to specify the boundary $D$. 

Consider the projective space $\PP^{n}$ with homogeneous coordinates 
\[
\bar{x} = [x_{0}:\cdots: x_{n}].
\] 
Throughout this paper, we fix a sequence of non-negative integers 
\begin{equation}\label{equ:degree-sequence}
d_{1}, \cdots, d_{l}, d_{b}
\end{equation}
such that $d_b + \sum_{i=1}^l d_{i} \leq n$. We further require that $d_i$ to be positive for any $i = 1, \cdots, l$. Note that we allow $d_{b} = 0$. Choose a collection of general homogeneous polynomials in $\bar{x}$:
\[
F_{1}, F_{2}, \cdots, F_{l}, G
\]
with degrees $\deg G = d_b$ and $\deg F_{i} = d_{i}$ for all $i$. 

Let $X \subset \PP^n$ be the sub-scheme defined by $F_{i}$ for $i=1,\cdots, l$, and $D \subset X$ be the locus cut out by $G$. Since $G$ and $F_i$ are general, we may assume that $(X,D)$ is a smooth pair. We call the corresponding log scheme $\lX$ a {\em log $( d_{1}, \cdots, d_{l}; d_b)$-complete intersection}. We say $\lX$ has a {\em tame boundary} if $\ch\kk \nmid d_b$. When $d_b = 0$, $X$ is a Fano $(d_{1}, \cdots, d_{l})$-complete intersection in the usual sense.

\subsection{Separable $\AA^1$-connectedness}\label{ss:log-rc}
Let $\lcX \to \lB$ be a morphism of log schemes. A {\em stable log map} over a log scheme $\lS$ is a commutative diagram
\begin{equation}
\xymatrix{
\lC \ar[r]^f \ar[d] & \lcX \ar[d] \\
\lS \ar[r] & \lB
}
\end{equation}
such that $\lC \to \lS$ is a family of log curves over $\lS$ as defined in \cite{FKato, LogCurve}, and the underlying map $\uf$ is a family of usual stable maps to the underlying family of targets $\cX/ B$. 

The theory of stable log maps is developed by Gross-Siebert \cite{GS}, and independently by Abramovich-Chen \cite{Chen, AC}. The most important result about log maps we will need in this paper, is that they form an algebraic stack. Both \cite{GS} and \cite{Chen} assume to work over field of characteristic zero for the purpose of Gromov-Witten theory, but the proof of algebraicity works in general. Olsson's log cotangent complex \cite{logcot} provides a well-behaved deformation theory for studying stable log maps when $\lcX \to \lB$ is log smooth. 

For a log smooth scheme $\lX$ over $\spec \kk$, we write $\Omega_{\lX}$ and $T_{\lX} = \Omega_{\lX}^{\vee}$ for the log cotangent and tangent bundles respectively. Generalizing Definition \ref{def:rc}(3), we introduce the terminologies which are crucial to our construction:

\begin{definition}\label{def:log-rc}
A log scheme $\lX$ given by a divisorial log smooth pair $(X,D)$ is called {\em separably $\AA^1$-connected} (respectively {\em separably $\AA^1$-uniruled}) if there is a single-marked, genus zero log map $f: \lC/\lS \to \lX$ with $C \cong \PP^1$ and $S$ a geometric point, such that $f^*T_{\lX}$ is ample (respectively semi positive), and the tangency at the marking is non-trivial. We call such log stable map a \emph{very free} (respectively \emph{free}) {\em $\AA^1$-curve}. 
\end{definition}

\begin{remark}\label{rem:log-def}
\begin{enumerate}
\item Since the tangency at the marking is non-trivial, the image of the marked point has to lie on the boundary $D$. We refer to \cite{ACGM} for the canonical evaluation spaces of the markings. 
\item The definition of log maps allow the image of components of the source curve lies in the boundary divisor. But when $f^*T_{\lX}$ is semi positive, a general deformation yields a map with smooth source curve whose image meets the boundary divisor only at the markings, see Lemma \ref{lem:smoothing}. Thus, the above definition of $\AA^1$-uniruledness is compatible with the definition in  \cite{KM}. 

\end{enumerate}
\end{remark}

\subsection{Proof of Theorem \ref{thm:main-fano} and \ref{thm:main-log-fano}}\label{ss:method}
Same as in \cite{Zhu}, the approach we will use here is by taking degenerations. However, this time we are able to chase the deformation theory with the help of logarithmic geometry. We summarize the steps in the proof, and refer to later sections for the technical details. 

First, consider a general Fano $(d_1,\cdots,d_l)$-complete intersection $X$. We take a general simple degeneration of $X$ as in Section \ref{ss:simple-degeneration}, and obtain a singular fiber by gluing a general log Fano $(d_{1},\cdots, d_{l} - 1; 1)$-complete intersection $(X_1,D)$ and a general log Fano $(d_{1},\cdots, 1; d_{l} - 1)$-complete intersection $(X_2,D)$ along the boundary divisor $D$. 


\begin{obs}[See Proposition \ref{prop:fano-to-log}]\label{obs:1}
The general fiber is SRC if 
\begin{enumerate}
\item $(X_1,D)$ is separably $\A^1$-connected;
\item $(X_2,D)$ is separably $\A^1$-uniruled.
\end{enumerate} 
\end{obs}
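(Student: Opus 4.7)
The strategy is to realize the general fiber $X$ as a smoothing of the nodal central fiber $X_0 = X_1 \cup_D X_2$, and to produce a very free rational curve on $X$ by first gluing the given $\AA^1$-curves on $X_1$ and $X_2$ into a stable log map to $X_0$, then deforming via log deformation theory. Concretely, let $f_1 : \PP^1 \to \lX_1$ be a very free $\AA^1$-curve and $f_2 : \PP^1 \to \lX_2$ a free $\AA^1$-curve, each with a single marking of contact order $c_i$ with $D$. To form a predeformable map to $X_0$, the contact orders at the marked points must agree on both sides; if $c_1 \ne c_2$, I would replace each $f_i$ by its precomposition with a cyclic cover $\PP^1 \to \PP^1$ totally ramified at the marked point of the appropriate degree so that the new contact orders both equal $c = \operatorname{lcm}(c_1,c_2)$. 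Such base change preserves ampleness (respectively semi positivity) of $f_i^* T_{\lX_i}$ because the cover is finite. After moving the images of the two markings to a common general point $p \in D$ (possible by freeness), I glue the two sources at their markings into a genus-zero nodal curve $C_0$ and equip the total object with the standard log structure at the node of contact order $c$, producing a stable log map $f_0 : \lC_0 \to \lX_0$ over the standard log point.

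Next, I view $X_0$ as the central fiber of the simple degeneration $\pi : \cX \to \AA^1$ constructed in Section \ref{ss:simple-degeneration}, and endow $\pi$ with its divisorial log structure so that $\lcX \to \lB := \AA^1$ (with log structure at the origin) is log smooth. By the algebraicity of the stack of stable log maps (Abramovich--Chen, Gross--Siebert), the log map $f_0$ defines a point of the relative moduli space $\M(\lcX/\lB)$. Olsson's log cotangent complex controls the deformation theory of $f_0$: the obstruction to deforming $f_0$ to nearby fibers lies in $H^1(C_0, f_0^* T_{\lcX/\lB})$. On the component mapping to $X_1$, the restriction of $T_{\lcX/\lB}$ is the log tangent bundle $T_{\lX_1}$, whose pullback is ample; on the component mapping to $X_2$ it restricts to $T_{\lX_2}$, whose pullback is semi positive. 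Dualizing and using a partial normalization sequence at the node, the $H^1$ contributions from each component vanish and the node contributes trivially once contact orders match. Hence $f_0$ deforms to a log map $f_\eta : \lC_\eta \to \lcX_\eta$ over a general geometric point $\eta$ of $\lB$.

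Finally, I verify that a general such deformation yields a very free rational curve on $X$. Since the log structure on $\cX$ is supported on the central fiber, over $\eta$ the log map $f_\eta$ becomes an ordinary map from a proper curve to the smooth Fano complete intersection $X$. Smoothness of the moduli at $f_0$ together with the fact that the smoothing deforms the node gives that a general deformation has irreducible source $\PP^1$ (cf. Lemma \ref{lem:smoothing}). For ampleness of $f_\eta^* T_X$, I use that ampleness is an open condition in families of locally free sheaves on $\PP^1$, together with the fact that the limit $f_0^* T_{\lcX/\lB}$ restricted to the $X_1$-component is ample and the $X_2$-component is contracted to a point in the $\AA^1$-family after smoothing the node; more precisely, after the smoothing deformation, the free summand from $X_2$ is absorbed into the very free summand from $X_1$ via the node-smoothing, producing an ample locally free sheaf on the smoothed $\PP^1$.

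The hard part will be the third stage: controlling the log tangent bundle across the node and showing that very freeness on one side plus mere freeness on the other is sufficient to guarantee ampleness after smoothing. This requires a careful analysis of the pushout sequence on $\lC_0$ and the matching of contact orders at the node, and is where the role of the log structure (as opposed to naive gluing of stable maps) is essential; the prototype for this argument is the stable map smoothing in \cite{Zhu}, here upgraded to the log setting via Olsson's log cotangent complex.
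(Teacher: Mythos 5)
Your proposal follows essentially the same route as the paper's proof of Proposition \ref{prop:fano-to-log}: take a very free $\AA^1$-curve on $(X_1,D)$ and a free one on $(X_2,D)$, use freeness to put both markings at a common general point of $D$, match contact orders by precomposing with covers ramified at the markings, glue to a stable log map to the central fiber via Lemma \ref{lem:simple-degenerate-gluing}, and smooth into the general fiber using semi-positivity (Lemma \ref{lem:smoothing}). The final step you flag as hard is in fact routine here: since $f_1^*T_{\lX_1}(-y_1-y_2)$ and $f_2^*T_{\lX_2}(-p)$ both have vanishing $H^1$, the normalization sequence and semicontinuity give very freeness of a general smoothing directly, with no need for the elementary-transform analysis (Proposition \ref{prop:positivity}), which is only required later when one component is merely semi-positive in the directions transverse to the boundary.
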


Second, note that $D$ is a general Fano $(d_1,\cdots,d_{l-1},d_l-1,1)$-complete intersection in one-dimensional lower. We reduce both (1) and (2) in Observation \ref{obs:1} to the SRC property of the boundary $D$:

\begin{obs}[See Lemma \ref{lem:log-uniruled} and Proposition \ref{prop:log-to-usual}] \hspace{20 mm}
\begin{enumerate}
\item $(X_1,D)$ is separably $\A^1$-connected if $D$ is ample and SRC.
\item $(X_2,D)$ is separably $\A^1$-uniruled if $D$ is ample and separably uniruled.
\end{enumerate}
\end{obs}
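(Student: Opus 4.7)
The plan is to establish both (1) and (2) by constructing a reducible stable log map $f \colon C \to \lX_i$ whose source $C$ is a nodal genus zero curve carrying a single marking with non-trivial contact order, then smoothing $f$ into a map from $\PP^1$ via log deformation theory. In each case $C$ will consist of a ``handle'' component mapped entirely into the boundary $D$ (a rational curve in $D$ provided by the hypothesis on $D$) glued to one or more ``tail'' components mapped to rational curves in $X_i$ meeting $D$ at prescribed points.

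For part (1), since $D \subset X_1$ is cut out by a linear form, a general line $\ell \subset X_1$ not contained in $D$ meets $D$ transversely at a single point $q$; take its normalization $T_0 \cong \PP^1 \to X_1$ as the tail. Using the SRC hypothesis on $D$, choose a very free curve $h \colon C_0 \to D$ with $C_0 \cong \PP^1$ passing through $q$ and with $\deg h^*\cO_{X_1}(D) \geq 2$. Form $C = C_0 \cup_p T_0$ by gluing along a node $p$ mapping to $q$, and place the single marking on $C_0$ at a general point. Balancing of contact orders (the node contributes $1$ from the tail side by transverse intersection, and contact orders along $C_0$ must sum to $\deg h^*\cO_{X_1}(D)$) forces the marking to have contact order $\deg h^*\cO_{X_1}(D) - 1 \geq 1$, so the marking is non-trivial as required.

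For part (2), we proceed similarly with $D \subset X_2$ of degree $d_l - 1$. Take $T_0$ to be the normalization of a general line in $X_2$, which meets $D$ transversely in $d_l - 1$ distinct points $q_1, \ldots, q_{d_l-1}$. Using separable uniruledness of $D$, choose a free rational curve $h_i \colon C_0^{(i)} \to D$ through $q_i$ for $i \leq d_l - 2$ and attach each $C_0^{(i)}$ to $T_0$ at $q_i$ as a node. Place the single marking at the remaining intersection $q_{d_l-1}$ on $T_0$, with contact order $1$.

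To complete each argument, one must verify (i) that $f^*T_{\lX_i}$ is ample (for (1)) or semi positive (for (2)), and (ii) that the reducible log map smooths to a map from an irreducible $\PP^1$. For (i), we exploit the log tangent sequences
\[
0 \to T_D \to T_{\lX_i}|_D \to \cO_D \to 0 \quad\text{and}\quad 0 \to T_{\lX_i} \to T_{X_i} \to i_* N_{D/X_i} \to 0
\]
together with the ampleness of $D$, the (very) freeness of the handles in $D$, and the standard positivity of lines in a Fano complete intersection, combining them along the nodes. For (ii), the smoothing is an application of Olsson's log cotangent complex \cite{logcot} and the log stable map deformation theory developed in \cite{Chen, GS}, in the spirit of Lemma \ref{lem:smoothing}. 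The main obstacle will be the positivity analysis on the handle components, which map entirely into $D$: because the quotient $\cO_D$ in the first sequence is only trivial, the ampleness of $f^*T_{\lX_i}$ on such a component cannot come solely from $T_D$, but must be propagated from the tails through the nodes using ampleness of $D$ (and hence of $N_{D/X_i}$). Careful bookkeeping of the log deformation spaces and of the contact orders at each node is needed to ensure the smoothing is unobstructed while retaining the desired positivity.
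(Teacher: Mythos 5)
Your skeleton for part (1) --- a very free curve in $D$ as handle, a line as tail, glue at a node and smooth --- is the same as the paper's (Proposition \ref{prop:log-to-usual}), but your proof stops exactly where the real work begins. You correctly identify that on the handle $C_0\subset D$ the sequence $0\to\cO\to T_{\lX_1}|_{C_0}\to T_D|_{C_0}\to 0$ may split off a trivial $\cO$-summand, so $f^*T_{\lX_1}$ is a priori only semi-positive there and positivity ``must be propagated from the tails through the nodes'' --- and then you defer this to ``careful bookkeeping'' without doing it. That propagation is the entire content of the proof and it is not routine: one needs (i) the explicit splitting type $\cO(1)^{\oplus(n+1-e)}\oplus\cO^{\oplus(e-l-1)}$ of the log tangent bundle on the tail line (Proposition \ref{prop:line}); (ii) the fact that the fibre $E'$ of its positive part at the node is transverse to the hyperplane $E$ corresponding to $T_D$, which is Lemmas \ref{lem:tame-non-vanishing} and \ref{lem:zero-projection} and is precisely where the tameness $\ch\kk\nmid d_b$ enters (automatic here since $d_b=1$ for $X_1$, but the mechanism must still be exhibited); and (iii) the elementary-transform computation of Construction \ref{cons}--Proposition \ref{prop:positivity}, which converts that transversality into $h^1(C,\K(-y_1-y_2))=0$ and hence ampleness on a general smoothing. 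Without (i)--(iii) the claim that the smoothed curve is very free is unsupported. (A minor point: by Lemma \ref{lem:relative-gluing} the marking produced by the gluing has contact order $1+\deg h^*N_{D/X_1}$, not $\deg h^*\cO_{X_1}(D)-1$; the sign does not affect non-triviality, but your balancing is backwards.)

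For part (2) your comb construction is both unnecessary and, as written, problematic. The paper's Lemma \ref{lem:log-uniruled} uses no tail at all: a free rational curve $Z\to D$ through a general point already satisfies $\deg f^*N_{D/X_2}>0$ because $D$ is ample; it lifts by Lemma \ref{lem:relative-smooth-out} to a log map $\lC\to\lX_2$ with a single marking of that contact order; and the sequence $0\to\cO_Z\to f^*T_{\lX_2}\to f^*T_D\to 0$ of Lemma \ref{lem:divisor-ambient} shows $f^*T_{\lX_2}$ is semi-positive. Definition \ref{def:log-rc} requires nothing more --- the image lying inside $D$ is permitted, cf.\ Remark \ref{rem:log-def}. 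Your version instead hangs the whole comb on a line in $X_2$ whose restricted log tangent bundle must be semi-positive; by Proposition \ref{prop:line} this is only established when $\ch\kk\nmid d_l-1$, a hypothesis you do not have for $(X_2,D)$ --- and avoiding exactly this is why the paper proves $\AA^1$-uniruledness using curves lying entirely in $D$ rather than tails crossing it.
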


Finally, the inductive process ends when $D$ is a projective space, which is of course separably rationally connected.




\subsection*{Acknowledgments}
The authors would like to thank  Dan Abramovich for sharing his ideas, and giving many helpful suggestions on the preliminary versions of this paper. They would like to thank Chenyang Xu for suggesting this collaboration and useful discussions.  The authors are grateful to Johan de Jong and Jason Starr for useful discussions, and the anonymous referee for his/her detailed comments and suggestions on the manuscript.

\section{Free $\AA^1$ lines on log Fano complete intersections}\label{sec:free-line}

The following is a variation of \cite[Theorem 4.2]{Angelini}, which allows us to construct free $\AA^1$ lines explicitly on log Fano complete intersections.

\begin{lemma}\label{lem:log-tanget}
Let $X$ be a smooth $(d_1,\cdots,d_{l})$-complete intersection in $\P^n$ defined by 
$$\{F_1=\cdots=F_l=0\}$$ 
and $D = \cup_{j=1}^{k}{D_{j}}$ be a simple normal crossings divisor on $X$ with each irreducible component $D_{j}$ defined by $\{G_{j} = 0\}$ for all $j$. Let $\lX$ be the log variety associated to the pair $(X,D)$, and write $d'_{j} = \deg G_j$. Then the log tangent bundle $T_{\lX}$ is the middle cohomology of the following complex:
$$\begin{CD} \O_{X}@>A>>\O_{X}(1)^{\oplus (n+1)}\oplus \O_{X}^{\oplus l} @>B>> \sum^l_{i=1}\O_{X}(d_i) \oplus \sum_{j=1}^{k}\O_{X}(d_j),
\end{CD}$$
with the arrows defined by
\[ A = (x_0,\cdots,x_n, d_1',\cdots, d'_k)^T\] 
and 
\[
B = \left(
\begin{array}{cc}
\Jac\vec{F} & 0 \\
\Jac \vec{G} & \diag(\vec{G})
\end{array}
\right)
\]
where $\vec{F} = (F_{1}, \cdots, F_{l})$, $\vec{G} = (G_{1},\cdots, G_{k})$, $\Jac\vec{F}$ and $\Jac\vec{G}$ are the corresponding Jacobian matrices, and $\diag \vec{G}$ denotes the diagonal matrix. 

Furthermore, when $\ch \kk \nmid d'_j$ for all $j$, the log tangent bundle $T_{\lX}$ is given by the kernel of the following morphism
$$\begin{CD} \O_{X}(1)^{\oplus (n+1)} \oplus \cO_{X}^{\oplus(k-1)} @>B'>> \sum^l_{i=1}\O_{X}(d_i) \oplus \sum_{j=1}^{k}\O_{X}(d_j),
\end{CD}$$
where 
\[
B' = \left(
\begin{array}{cc}
\Jac\vec{F} & 0 \\
\Jac \vec{G} & \diag(G_1, \cdots, G_{k-1})
\end{array}
\right)
\]
\end{lemma}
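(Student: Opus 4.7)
The strategy is to combine three short exact sequences on $X$: the Euler sequence pulled back from $\PP^n$, the conormal sequence
$$0 \to T_X \to T_{\PP^n}|_X \xrightarrow{\Jac \vec F} \bigoplus_{i=1}^l \cO_X(d_i) \to 0$$
for the smooth complete intersection $X \hookrightarrow \PP^n$, and the log residue sequence
$$0 \to T_{\lX} \to T_X \xrightarrow{\mathrm{res}} \bigoplus_{j=1}^k \cO_{D_j}(d_j') \to 0,$$
where $N_{D_j/X} \cong \cO_{D_j}(d_j')$ via $G_j$ and $\mathrm{res}$ is the normal derivative, i.e. $\Jac \vec G$ reduced modulo each $G_j$. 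Splicing these yields
$$T_{\lX} = \ker\Bigl(T_{\PP^n}|_X \longrightarrow \bigoplus_i \cO_X(d_i) \oplus \bigoplus_j \cO_{D_j}(d_j')\Bigr).$$

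I would then lift to $\cO_X(1)^{n+1}$ via Euler, and lift each quotient $\cO_{D_j}(d_j')$ via the Koszul resolution $0 \to \cO_X \xrightarrow{G_j} \cO_X(d_j') \to \cO_{D_j}(d_j') \to 0$. A tuple $(a_0,\ldots,a_n) \in \cO_X(1)^{n+1}$ maps to zero in $\cO_{D_j}(d_j')$ exactly when there exists $c_j \in \cO_X$ with $\sum_i a_i \partial_i G_j = c_j G_j$; recording the $c_j$'s as auxiliary variables identifies the set of such lifts with $\ker B$ inside $\cO_X(1)^{n+1} \oplus \cO_X^k$. On the other hand the lift is only well-defined up to the Euler vector field $(x_0,\ldots,x_n)$, and the identity $\sum_i x_i \partial_i G_j = d_j' G_j$ forces the corresponding $c_j$-coordinates to be $d_j'$. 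Hence the ambiguity is precisely $\im A$, and $T_{\lX} = \ker B / \im A$. The one substantive identity to verify is $B \circ A = 0$: the top block vanishes by the Euler relation applied to each $F_i$, and the bottom block vanishes by the Euler identity for $G_j$.

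For the second statement, when $\ch \kk \nmid d_j'$ for all $j$ the last entry $d_k'$ of $A(1) = (x_0,\ldots,x_n,d_1',\ldots,d_k')$ is a unit in $\cO_X$, so $(a_i,c_j) \mapsto c_k/d_k'$ is a retraction of $A$. This provides a direct sum decomposition $\cO_X(1)^{n+1} \oplus \cO_X^k \cong A(\cO_X) \oplus V$ with $V \cong \cO_X(1)^{n+1} \oplus \cO_X^{k-1}$ being the subsheaf $\{c_k = 0\}$. Under this splitting $\ker B = A(\cO_X) \oplus \ker(B|_V)$, so the middle cohomology collapses to $\ker(B|_V)$; dropping the column of $B$ corresponding to $c_k$ gives exactly the matrix $B'$ of the statement. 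The main bookkeeping obstacle throughout is keeping indices and signs consistent between the Jacobians $\Jac \vec F$, $\Jac \vec G$ and the Koszul maps $G_j$, but once the three ambient sequences are spliced correctly the rest is formal.
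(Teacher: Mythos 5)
Your proof is correct, and at bottom it follows the same route as the paper: the paper's own proof is a two-line splice, quoting Angelini's Theorem~4.2 for the case $\vec F=0$ (the logarithmic Euler-type presentation coming from the residue sequence) and the standard Euler/normal-bundle presentation of $T_X$ for the case $\vec G=0$, and then "combining" the two. What you have done is supply in full the content that the paper delegates to the citation: the residue sequence $0\to T_{\lX}\to T_X\to\bigoplus_j N_{D_j/X}\to 0$, the identification $N_{D_j/X}\cong\cO_{D_j}(d'_j)$ via $G_j$, the lift to $\cO_X(1)^{\oplus(n+1)}$ with the auxiliary $c_j$'s, and the identification of the Euler ambiguity with $\im A$. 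Your treatment of the second statement (a retraction of $A$ using that $d'_k$ is a unit, splitting off $\im A$ and collapsing the middle cohomology to $\ker B'$) is cleaner and more explicit than anything in the paper, and it correctly shows that only $\ch\kk\nmid d'_k$ is actually needed there.

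Two small points of bookkeeping, neither of which is a gap in your argument but both worth writing down explicitly. First, with the signs exactly as printed in the statement one gets $B\circ A=(0,\,2d'_jG_j)_j$ rather than $0$, since $\sum_i x_i\partial_i G_j=d'_jG_j$ and $\diag(\vec G)\cdot(d'_1,\dots,d'_k)^T=(d'_jG_j)_j$ have the same sign; one must take $A=(x_0,\dots,x_n,-d'_1,\dots,-d'_k)^T$ (or negate the $\diag(\vec G)$ block), as you implicitly do when you say the bottom block vanishes by the Euler identity. Second, the middle term of the complex should read $\cO_X^{\oplus k}$ rather than $\cO_X^{\oplus l}$ (one auxiliary variable per boundary component), and the last summand of the target should be $\sum_j\cO_X(d'_j)$; your proof uses the corrected version, which is the intended one.
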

\begin{proof}
In case $\vec{F} = 0$, the result is proved in \cite[Theorem 4.2]{Angelini}. In case $\vec{G} = 0$ and $\vec{F}$ is non-trivial, the tangent bundle $T_{X}$ is given by the middle cohomology of the following:
\[
\cO_{X} \stackrel{A}{\longrightarrow} \cO_{X}(1)^{\oplus(n+1)} \stackrel{\Jac\vec{F}}{\longrightarrow} \sum_{i=1}^{l}\cO_{X}(d_{i}).
\]
Now the statement follows from combining the above sequence with \cite[Theorem 4.2]{Angelini}.
\end{proof}


\begin{proposition}\label{prop:line}
Let $(X,D)$ be a general log Fano $(d_1,\cdots,d_{l}; d_b)$-complete intersection in $\P^n$ with $e=\sum^l_{i=1}d_i+d_b \leq n$. If $\ch \kk\nmid d_b$, then the pair $(X,D)$ is separably log-uniruled by lines. Furthermore, the restriction of the log tangent bundle to a general log free line has splitting type 
\[\O(1)^{\oplus{(n+1-e)}} \oplus \O^{\oplus({e-l-1})}.\]
\end{proposition}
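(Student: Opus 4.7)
The plan is to construct an explicit line $\ell$ on a generic $(X,D)$ meeting $D$ at one point with full tangency $d_b$, and to determine the splitting of $T_{\lX}|_\ell$ using the presentation of Lemma \ref{lem:log-tanget} together with cohomology on $\P^1$. For the existence step, I consider the incidence variety $I = \{(\vec F, G, \ell) : \ell \subset V(\vec F),\ G|_\ell = c(\alpha s + \beta t)^{d_b}\text{ for some }c, \alpha, \beta\}$; imposing $\ell \subset V(F_i)$ costs $d_i + 1$ linear conditions on $F_i$, and the tangency condition on $G$ cuts out a codimension $d_b - 1$ locus in $H^0(\O_\ell(d_b))$. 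A direct count gives $\dim I - \dim\{(\vec F, G)\} = 2n - e - l - 1 \geq 0$ (using $e \leq n$ and $l \leq e - 1$), so the projection $I \to \{(\vec F, G)\}$ is dominant, and for generic $(X, D)$ log smoothness along $\ell$ holds by Bertini.

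Now, since $\ch \kk \nmid d_b$, the tame form of Lemma \ref{lem:log-tanget} (with $k = 1$) and log smoothness along $\ell$ give the short exact sequence
\[0 \to T_{\lX}|_\ell \to \O_\ell(1)^{\oplus(n+1)} \xrightarrow{\Phi} \bigoplus_{i=1}^l \O_\ell(d_i) \oplus \O_\ell(d_b) \to 0\]
with $\Phi = (\Jac \vec F,\ \Jac G)^T|_\ell$; hence $T_{\lX}|_\ell$ has rank $n - l$ and degree $n + 1 - e$. Writing $T_{\lX}|_\ell = \bigoplus \O(a_k)$, to conclude $a_k \in \{0, 1\}$ it suffices to verify $H^0(T_{\lX}|_\ell(-2)) = 0$ and $H^1(T_{\lX}|_\ell(-1)) = 0$. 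The first vanishing is immediate from twisting the above sequence by $\O_\ell(-2)$ since $H^0(\O_\ell(-1)^{\oplus(n+1)}) = 0$; the second reduces, upon twisting by $\O_\ell(-1)$, to surjectivity of the induced map on global sections
\[\Phi_\star: \kk^{n+1} \to \bigoplus_{i=1}^l \kk^{d_i} \oplus \kk^{d_b}.\]
This surjectivity of $\Phi_\star$ will be the main technical obstacle.

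To verify it, I would specialize to $\ell_0 = \{x_2 = \cdots = x_n = 0\}$ with marked point $p_0 = [1:0:\cdots:0]$, and take $F_i = \sum_{k \geq 2} x_k f_{ik}$ and $G = c\, x_1^{d_b} + \sum_{k \geq 2} x_k g_k$ with generic polynomials $f_{ik}, g_k$ and $c \in \kk^*$. A direct Jacobian computation yields $\partial F_i/\partial x_j|_{\ell_0} = 0$ for $j = 0, 1$ and $\partial F_i/\partial x_k|_{\ell_0} = f_{ik}(s,t,0,\ldots,0)$ for $k \geq 2$; similarly $\partial G/\partial x_0|_{\ell_0} = 0$, $\partial G/\partial x_1|_{\ell_0} = c\, d_b\, t^{d_b - 1}$ (where the tameness $\ch \kk \nmid d_b$ is essential), and $\partial G/\partial x_k|_{\ell_0} = g_k(s,t,0,\ldots,0)$ for $k \geq 2$. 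Thus the matrix of $\Phi_\star$ has zero first column, a second column contributing only the top-jet direction $t^{d_b - 1}$ of the $G$-block, and $n - 1$ generic remaining columns; the inequality $n \geq e$ then guarantees rank $e$ for generic coefficients. Semi-continuity promotes this to a generic point of $I$, hence to a generic log line on a generic $(X, D)$. The resulting splitting $T_{\lX}|_\ell \cong \O(1)^{\oplus(n+1-e)} \oplus \O^{\oplus(e-l-1)}$ is semi-positive and globally generated, so $\ell$ is a free $\AA^1$-line in the sense of Definition \ref{def:log-rc}, proving both claims of the proposition.
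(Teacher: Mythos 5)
Your proof follows essentially the same route as the paper's: specialize to the line $\{x_2=\cdots=x_n=0\}$, use the presentation of $T_{\lX}$ from Lemma \ref{lem:log-tanget} to get the short exact sequence $0 \to T_{\lX}|_\ell \to \O_\ell(1)^{\oplus(n+1)} \to \bigoplus_i\O_\ell(d_i)\oplus\O_\ell(d_b)\to 0$, reduce semipositivity to surjectivity of the Jacobian map on global sections after twisting by $\O(-1)$, verify that surjectivity by an explicit choice of equations in which tameness enters exactly through the term $d_b\,x_1^{d_b-1}$, and pin down the splitting type from the degree count together with the fact that $T_{\lX}|_\ell$ is a subsheaf of $\O(1)^{\oplus(n+1)}$. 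The paper writes down specific monomial equations making the Jacobian map visibly surjective, whereas you take generic coefficients and count ranks; both work, and your check that $H^0(T_{\lX}|_\ell(-2))=0$ is the same observation the paper makes implicitly.

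The one step that does not hold up as written is the existence argument: from $\dim I - \dim\{(\vec F,G)\} = 2n-e-l-1\ge 0$ you conclude that the projection $I\to\{(\vec F,G)\}$ is dominant, but an inequality of dimensions does not imply dominance (the image could be a proper subvariety over which the fibers are larger than expected). The conclusion is true, but it has to come from somewhere else. The paper handles this by invoking log deformation theory at the outset: it suffices to exhibit a \emph{single} pair $(X_0,D_0)$ that is log smooth along a line $L$ with $T_{\lX_0}|_L$ semipositive, since then $H^1(L,T_{\lX_0}|_L)=0$ makes the forgetful map from the incidence space to the parameter space of pairs smooth at that point, hence dominant. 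Your explicit computation with $F_i=\sum_{k\ge2}x_kf_{ik}$ and $G=c\,x_1^{d_b}+\sum_{k\ge2}x_kg_k$ supplies exactly such a point, so the repair is immediate: replace the dimension count by this unobstructedness argument (or, equivalently, verify that the fiber of $I$ over your explicit $(\vec F,G)$ has the expected dimension). With that change the proof is complete and matches the paper's.
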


\proof By the log deformation theory, it suffices to produce a pair $(X,D)$ log-smooth along a line such that the restriction of the log tangent bundle is semi positive. 

Let $L$ be the line defined by 
\[\{x_2=\cdots=x_{n}=0\}.\] 
For simplicity, we introduce $m_{j} = \sum_{i=1}^j d_i$, and set $m_0 = 0$ and $m_{l+1} = e$. Choose the following homogeneous polynomials:
\begin{equation}
\begin{array}{lll}
F_{i} & = & x_{m_{i-1} + 2}\cdot x_{0}^{d_i - 1} + x_{m_{i-1} + 3}\cdot x_1 \cdot x_{0}^{d_i - 2} + \cdots + x_{m_i+1}\cdot x_1^{d_{i}-1} \\
G & = & x_{1}^{d_b} + x_{m_l + 2}\cdot x_{1}^{d_b - 2}\cdot x_0 + \cdots + x_{m_{l+1}}\cdot x_{0}^{d_b-1}.
\end{array}
\end{equation}
for $i \in \{1,\cdots, c\}$. Note that when $d_b = 1$, we get
\begin{equation}
G = x_1.
\end{equation}
and all $F_i$ remain the same. We then check that
\begin{enumerate}
 \item $L$ lies in the smooth locus of $X$;
 \item $L$ intersects $D$ only at the point $[x_0:x_1] = [1:0]$, which is a smooth point of $D$.
\end{enumerate}

By Lemma \ref{lem:log-tanget}, we have a short exact sequence of sheaves over $L$ after twisting down by $\O(-1)$:
\[
0 \to T_{X^\dagger}|_L(-1) \to \cO^{\oplus(n+1)} \stackrel{B'}{\longrightarrow}  \sum^c_{i=1}\O(d_i-1) \oplus \O(d_b-1)\to 0.
\]

Note that $ T_{X^\dagger}|_L$ is semi positive if and only if $H^1(L,T_{X^\dagger}|_L(-1))=0$. By the long exact sequence of cohomology, it suffices to show that

\[
H^0(\cO^{\oplus(n+1)}) \stackrel{B'}{\longrightarrow}  H^0(\sum^c_{i=1}\O(d_i - 1) \oplus \O(d - 1))
\]
is surjective. This follows from the assumption $\ch \kk\nmid d_b$, and the choice of polynomials $F_{i}$ and $G$. Finally, since $ T_{X^\dagger}|_L$ is a subsheaf of $\O(1)^{\oplus{n+1}}$, the splitting type of the log tangent bundle on the line is as desired. This finishes the proof.
\qed

\section{Reduction to log Fano varieties via degeneration}\label{sec:induction}
\subsection{Simple degeneration}\label{ss:simple-degeneration}

Consider a log smooth morphism of fine and saturated log schemes $\pi: \lX_0 \to p^{\dagger}$ with $p^{\dagger}$ the standard log points, i.e. $\ocM_{p^{\dagger}} = \cM_{p^{\dagger}}/\kk^* \cong \NN$. 

\begin{definition}\label{def:simple-degeneration}
We call such log smooth morphism $\pi: \lX_0 \to p^{\dagger}$ a {\em simple degeneration} if  the underlying space $X_0$ is given by two smooth varieties $Y_{1}$ and $Y_{2}$ intersecting trasversally along a connected smooth divisor $D$. 
\end{definition}

\begin{remark}\label{rem:simple-deg-can}
By \cite{LogSS}, any log smooth morphism $\pi$ as above which is a simple degeneration, admits a canonical log structure $\tilde{\pi}: \tilde{X}^{\dagger} \to p^{\dagger}$ and a morphism $g: p^{\dagger} \to p^{\dagger}$ such that $\pi$ is the pull-back of $\tilde{\pi}$ along $g$.
\end{remark}

Assume we are in the situation of Section \ref{ss:log-pair}. We fix a smooth $(d_1,\cdots,d_{l-1})$-complete intersection $W$ of codimension $l-1$ in $\P^n$ cut out by $F_1,\cdots,F_{l-1}$. Let $G_l$ be the product $G_1 G_2$ of two homogeneous polynomials of degree $a$ and $d_l-a$ and let $F_l$ be a homogeneous polynomial of degree $d_l$.

Consider the pencil of divisors in $Z\subset W\times\A^1$ defined by $\{t \cdot F_l+ G_l=0\}$. Let $\pi: Z \to \A^{1}$ be the projection to the second factor. 

For a general choice of $F_1,\cdots,F_l, G_1$, and $G_{2}$, there exists an open neighbourhood $U\subset\A^1$ of $0$ satisfying the following properties:
\begin{enumerate}
\item $\pi: \cX:=\pi^{-1}U\rightarrow U$ is a flat family of $(d_1,\cdots,d_l)$-complete intersections in $\P^n$;
\item the general fibers $\cX_t$ are smooth;
\item the special fiber $\cX_0$ is a union of a smooth $(d_1,\cdots,d_{l-1},a)$-complete intersection $X_1$ and a smooth $(d_1,\cdots,d_{l-1},d_l-a)$-complete intersection $X_2$;
\item the intersection $D$ of $X_1$ and $X_2$ is a smooth $(d_1,\cdots,d_{l-1},a, d_l-a)$-complete intersection;
\item the singular locus of the total space is given by the base locus $$\{F_l=0\} \cap D,$$ and is of codimension one in $D$.
\end{enumerate}
Let $\cX^\circ$ be the complement of $\{F_l=0\} \cap D$ in $\cX$. $\cX^{\circ}$ is the smooth locus of the total space $\cX$ in the usual sense. Consider the canonical divisorial log structure $\cM_{\cX^{\circ}}$ associate to the pair $(\cX^{\circ}, \partial \cX^{\circ}:= \pi^{-1}(0))$, and the log structure $\cM_{\AA^1}$ associated to $(\AA^1, 0)$. Then we have a morphism of log schemes
\begin{equation}\label{equ:target-degeneration}
\pi^{\dagger}: (\cX^{\circ}, \cM_{\cX^{\circ}}) \to (\AA^{1}, \cM_{\AA^{1}}).
\end{equation}
Write 
\begin{equation}\label{equ:simple-degeneration}
\pi^{\dagger}_{0}: Y^{\dagger} \to 0^{\dagger}
\end{equation} 
for the fiber over $0 \in \AA^1$. The closure of the underlying scheme $Y$ of $Y^\dagger$ is given by $X_{1}\cup X_{2}$.

\begin{lemma}\label{lem:degeneration}
The log map $\pi^{\dagger}$ is a log smooth morphism of fine and saturated log schemes and the central fiber $Y^{\dagger}$ is a simple degeneration. In particular, the general point of $D$ lies in the log smooth locus of $\pi^\dagger$.
\end{lemma}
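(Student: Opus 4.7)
The plan is to verify log smoothness by a local computation, stratifying $\cX^{\circ}$ according to its image in $\A^1$. Away from $\pi^{-1}(0)$ both log structures are trivial and $\pi$ is smooth in the usual sense, so log smoothness is immediate. At a point lying in the interior $X_i\setminus D$ of a single component of the central fiber, the total space $\cX$ is smooth and the central fiber is étale locally cut out by a single coordinate; hence $\pi^{\dagger}$ is étale locally the strict smooth morphism $\spec\kk[x,z_1,\ldots,z_r]\to\spec\kk[t]$ with $t\mapsto x$, and both divisorial log structures are generated by the single equation $\{x=0\}$. Strict smooth morphisms of log schemes are log smooth.

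The essential case is a point $p\in D\cap\cX^{\circ}$. By the definition of $\cX^{\circ}$, the total space $\cX$ is smooth at $p$, while the central fiber $X_1\cup X_2$ is a simple normal crossings divisor near $p$. Thus one may choose étale local coordinates $(x_1,x_2,z_1,\ldots,z_r)$ on $\cX$ around $p$ such that $X_1=\{x_1=0\}$, $X_2=\{x_2=0\}$, and $t=u\cdot x_1x_2$ for some unit $u$. The divisorial log structure on $\cX^{\circ}$ near $p$ is generated étale locally by $x_1$ and $x_2$, while that on $\A^1$ at $0$ is generated by $t$; so $\pi^{\dagger}$ is étale locally induced from the chart $\N\to\N^2$, $1\mapsto(1,1)$, which is the prototypical nodal log smooth morphism. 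Fineness and saturatedness are automatic since divisorial log structures associated to simple normal crossings divisors are fine and saturated.

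To verify that $Y^{\dagger}\to 0^{\dagger}$ is a simple degeneration in the sense of Definition \ref{def:simple-degeneration}, I check that $Y=X_1\cup X_2$ is the transverse union of two smooth varieties along the smooth divisor $D$. Smoothness of $X_1$, $X_2$ and $D$ is guaranteed by the general choice of $F_1,\ldots,F_l,G_1,G_2$ in \ref{ss:simple-degeneration}, and transversality at any point of $D\cap\cX^{\circ}$ is recorded by the local coordinate description above. Finally, since $\{F_l=0\}\cap D$ is a proper closed subset of $D$, the open subset $D\cap\cX^{\circ}$ is dense in $D$, so the last assertion follows from the log smoothness verified in the previous step.

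The main obstacle will be establishing the local normal form $t=u\cdot x_1x_2$ at a point $p\in D\cap\cX^{\circ}$: one must combine smoothness of $\cX$ at $p$ with the observation that $\pi^{-1}(0)=X_1+X_2$ as Cartier divisors, forcing the pullback of the uniformizer $t$ to factor as the product of the local equations of $X_1$ and $X_2$ up to a unit. Everything else is a routine bookkeeping of divisorial log structures.
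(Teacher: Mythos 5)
Your proof is correct and takes essentially the same route as the paper: the paper's entire proof is the one-line observation that the pair $(\cX^{\circ}, \partial \cX^{\circ})$ over $\AA^1$ is a simple normal crossings degeneration, and your stratified local computation --- in particular the normal form $t = u\cdot x_1x_2$ obtained by solving $tF_l+G_1G_2=0$ at a point of $D\cap\cX^{\circ}$ where $F_l\neq 0$ --- is exactly the detailed verification of that observation.
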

\begin{proof}
This follows since the pair $(\cX^{\circ}, \partial \cX^{\circ})$ over $\AA^1$ is a simple normal crossings degeneration.
\end{proof}

\subsection{The gluing construction}\label{ss:first-gluing}

\begin{lemma}\label{lem:simple-degenerate-gluing}
Let $\pi: Y^{\dagger} \to p^{\dagger}$ be a simple degeneration with its canonical log structures as in Definition \ref{def:simple-degeneration}. Let $\uf: C \to Y$ be a usual genus zero stable map with $C$ given by two irreducible components $C_{1}$ and $C_{2}$ glued along a node $x \in C$. Further assume that $f^{-1}(D) = x$ with the same contact orders $c$ on each components. Then there is a stable log map given by the following diagram:
\begin{equation}\label{diag:simple-deg-gluing}
\xymatrix{
\lC \ar[rr]^{f} \ar[d] && \lY \ar[d] \\
p^{\dagger} \ar[rr]^{u} && p^{\dagger}.
}
\end{equation}
over the underlying stable map $\uf$, such that the log structure associated to $\lC \to p^{\dagger}$ is the canonical one as in \cite{FKato, LogCurve}. Furthermore, on the level of characteristics $\bar{u}^{\flat}: \NN \to \NN$ is given by multiplication by $c$.
\end{lemma}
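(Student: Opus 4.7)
The plan is to construct the log enhancement of $\uf$ étale-locally near the node $x$, and observe that away from $x$ the log structures are canonically pulled back from $p^\dagger$, so the enhancement extends uniquely.

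First I would fix étale-local models. Near $x$, write $C = \Spec\kk[u,v]/(uv)$, and near $f(x) \in D$ write the target as $\Spec \kk[x_1, x_2, \bar{w}]/(x_1 x_2)$ with $D = \{x_1 = x_2 = 0\}$; this is available by Remark \ref{rem:simple-deg-can} together with the local structure of a simple degeneration. The canonical log structure on $\lY$ at $f(x)$ has characteristic monoid $\NN^{2} = \langle \alpha_{1}, \alpha_{2}\rangle$ with $\alpha_{i} \mapsto x_{i}$, and the structure map from the base $p^{\dagger}$ sends its generator $\delta_{Y}$ to $\alpha_{1} + \alpha_{2}$. The hypothesis that $f^{-1}(D) = \{x\}$ with matching contact order $c$ on each branch translates into $f^{*}(x_{i}) = u_{i}^{c}\cdot(\text{unit})$, where $(u_{1}, u_{2}) = (u, v)$.

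Next I would equip $C$ with the canonical log structure of \cite{FKato, LogCurve} over $p^\dagger$, with node thickness equal to $1$: at $x$, the characteristic is $\NN^{2} = \langle \beta_{1}, \beta_{2}\rangle$ with $\beta_{i} \mapsto u_{i}$, and the base generator $\delta_{C}$ maps to $\beta_{1} + \beta_{2}$. Outside $x$ both $\cM_{C}$ and $f^{-1}\cM_{Y^{\dagger}}$ reduce to log structures pulled back from $p^\dagger$. The characteristic map $\bar f^{\flat}$ is then forced at the node by the contact orders: set $\alpha_{i} \mapsto c\beta_{i}$. This lifts to an honest monoid homomorphism $f^{-1}\cM_{Y^\dagger} \to \cM_{C}$ because $f^{*}(x_{i}) = u_{i}^{c}\cdot(\text{unit})$ (the ambiguity is absorbed into $\cO_{C}^{*}$). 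Tracking $\delta_{Y} = \alpha_{1} + \alpha_{2}$ produces $\delta_{Y} \mapsto c(\beta_{1} + \beta_{2}) = c\,\delta_{C}$, which pins down the base morphism $\bar u^{\flat} : \NN \to \NN$ as multiplication by $c$, exactly as claimed. Away from $x$ one has $f(C) \cap D = \emptyset$, so the compatibility becomes the tautological statement that $u^{\flat}$ agrees with itself on the base-pulled-back log structures.

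The main, and rather mild, obstacle will be checking that these local choices glue into a global stable log map with the advertised canonical/minimal base, with no ambiguity in the characteristic data. That rigidity is exactly the combinatorial content of the matching contact orders hypothesis: by the minimality/basicness analysis of stable log maps in \cite{Chen, GS, AC}, equal contact orders on the two branches at $x$ is precisely the necessary and sufficient condition for a prelog map to admit a log enhancement with base $p^\dagger$ and multiplication-by-$c$ structure map, and all ambiguities on overlaps are absorbed into $\cO^{*}$. Hence the local data constructed above glues to the required diagram \eqref{diag:simple-deg-gluing}.
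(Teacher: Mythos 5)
Your construction is correct, and it is essentially the paper's proof: the paper disposes of this lemma by citing the construction in \cite[Section 5.2.3]{Kim}, which is precisely the \'etale-local chart computation you carry out (characteristic $\NN^2$ at the node and at $f(x)$, the map $\alpha_i\mapsto c\beta_i$ lifted using $f^*(x_i)=u_i^c\cdot(\text{unit})$, and the resulting base map $\delta_Y\mapsto c\,\delta_C$). The only point worth making explicit is that your local model silently uses that the two branches of $C$ at $x$ map to the two \emph{different} components $Y_1$, $Y_2$ (which follows from $f^{-1}(D)=x$ and connectedness of $C_i\setminus\{x\}$ in the intended situation, and is the predeformability needed for the lift to exist).
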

\begin{proof}
This follows from the construction in \cite[Section 5.2.3]{Kim}.
\end{proof}

We next consider a log smooth variety $\lX$ given by a smooth variety $X$ and a smooth divisor $D \subset X$. Consider the $\PP^1$-bundle $\PP := \PP(N_{D/X}\oplus\cO_{D})$ with two disjoint divisors $D_{0} \cong D_{\infty} \cong D$ such that $N_{D_{0}/\PP} \cong N^{\vee}_{D/X}$ and $N_{D_{\infty}/\PP}\cong N_{D/X}$. Gluing $\PP$ and $X$ by identifying $D_{0}$ with $D$, we obtain a scheme $Y$. By \cite[Theorem 11.2]{FKato}, there is simple degeneration with the central fiber $\pi: \lY \to p^{\dagger}$  a log smooth simple degeneration  as in Lemma \ref{lem:simple-degenerate-gluing}.  By \cite[Proposition 6.1]{GS}, there is a log map $g: \lY \to \lX$ contracting the $\PP^1$-bundle $\PP$ to the divisor $D$. 

\begin{lemma}\label{lem:relative-smooth-out}
Consider a genus zero stable map
\[
\uf: C \to D
\]
such that $C \cong \PP^1$ and $\deg (N_{D/X})|_{C} = c \geq 0$. Then there is a log map $f: \lC \to \lX$ over $\uf$ with a unique marking $\sigma \in \lC$ of contact order $c$. 
\end{lemma}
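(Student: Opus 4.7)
The plan is to produce $f$ as the composition of an explicit auxiliary log map to $\lY$ with the log contraction $g:\lY\to\lX$. Set $\cN:=\bar{f}^{*}N_{D/X}$, a line bundle of degree $c\geq 0$ on $C\cong\PP^{1}$. Pulling back the $\PP^{1}$-bundle $\PP=\PP(N_{D/X}\oplus\O_{D})$ along $\bar{f}$ gives $\bar{f}^{*}\PP=\PP(\cN\oplus\O_{C})\to C$, whose two distinguished sections $D_{0}|_{C}$ and $D_{\infty}|_{C}$ correspond respectively to the quotients $\cN\oplus\O_{C}\twoheadrightarrow\O_{C}$ and $\cN\oplus\O_{C}\twoheadrightarrow\cN$. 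I will choose $\psi\in H^{0}(\cN)\cong H^{0}(\O_{\PP^{1}}(c))$ vanishing to order $c$ at a single point $\sigma\in C$, and define $s:C\to\bar{f}^{*}\PP$ to be the section corresponding to the quotient $\cN\oplus\O_{C}\twoheadrightarrow\cN$, $(u,v)\mapsto u+\psi\cdot v$. Since the $\cN$-component is $\mathrm{id}_{\cN}$, which is nowhere vanishing, $s(C)$ avoids $D_{0}$; and $s$ meets $D_{\infty}$ only at $\sigma$, with multiplicity exactly $c$. Post-composing $s$ with $\bar{f}^{*}\PP\hookrightarrow\PP\hookrightarrow Y$ yields a morphism $\tilde{s}:C\to Y$ whose image lies in $\PP\setminus D_{0}$, in particular disjoint from the singular locus of $Y$ where the log structure of $\lY$ is concentrated.

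Next, I will equip $C$ with the canonical log smooth curve structure over the standard log point $p^{\dagger}$ carrying a single marking at $\sigma$. The tangency of $\tilde{s}$ to the smooth divisor $D_{\infty}\subset\PP$ of order $c$ at $\sigma$ should promote $\tilde{s}$ uniquely to a minimal log map $\tilde{f}:\lC\to\lY$ (in the sense of \cite{AC,GS}) in which $\sigma$ carries contact order $c$. Composing with $g$ produced by \cite[Prop.~6.1]{GS} then yields $f:=g\circ\tilde{f}:\lC\to\lX$. The underlying map of $f$ is $\bar{f}$ followed by the inclusion $D\hookrightarrow X$, since $g|_{\PP}$ is the bundle projection $\PP\to D$ and $\tilde{s}$ is a section of $\bar{f}^{*}\PP\to C$; and since $g$ identifies $D_{\infty}\subset\PP$ with $D\subset X$ at the level of log structures, the marking $\sigma$ should retain contact order $c$ under $f$, as required.

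The main obstacle is ensuring that the tangency to $D_{\infty}$ constructed above truly descends to contact order $c$ with $D\subset X$ under $g$. Strictly, $D_{\infty}$ is not part of the divisorial log structure of $\lY$ coming from the simple degeneration along $D_{0}$; however, \cite[Prop.~6.1]{GS} arranges $g$ precisely so that the $\PP^{1}$-bundle contraction $\PP\to D$ is enhanced to a log map compatible with the $D$-divisorial log structure on $\lX$. The verification should then reduce to a local computation at $\sigma$, where the explicit vanishing of $\psi$ to order $c$ directly realizes the desired contact order.
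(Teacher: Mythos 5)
Your construction is correct and is essentially the paper's own proof: both choose a section of $\uf^*N_{D/X}\cong\cO_C(c\cdot\sigma)$ vanishing to order exactly $c$ at $\sigma$, observe that the resulting section of the pulled-back $\PP^1$-bundle meets $D_{\infty}$ only at $\sigma$ with multiplicity $c$ and misses $D_0$, lift it to a log map to $\lY$ via \cite[Section 5.2.3]{Kim}, and compose with the contraction $g:\lY\to\lX$ of \cite[Proposition 6.1]{GS}. The subtlety you flag at the end is resolved exactly by those two citations (the log structure on $\lY$ used there does see $D_{\infty}$, which $g$ matches with $D\subset X$), which is all the paper says as well.
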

\begin{proof}
Pick an arbitrary point $\sigma \in C$, and fix an isomorphism
\[
(N_{D/X})|_{C} \cong \cO_C(c \cdot \sigma).
\]
Choose a section $s \in H^0(\cO_{C}(c\cdot \sigma))$ with a zero of order $c$ at $\sigma$. Thus, the section $s$ defines a map 
\[
\uf': C \to \PP
\]
which is tangent to $D_{\infty}$ at $\sigma$ only of contact order $c$, and does not meet $D_{0}$. By \cite[Section 5.2.3]{Kim}, there is a log map 
\[
\xymatrix{
C^{\dagger} \ar[r]^{f'} \ar[d] & \lY \ar[d] \\
p^{\dagger} \ar[r] & p^{\dagger}.
}
\]
Consider the morphism of log schemes $g: \lY \to \lX$. Now the composition $f:= g\circ f'$ defines the log map we want.
\end{proof}

\begin{lemma}\label{lem:relative-gluing}
Notations as above, consider a genus zero stable map
\[
\uf: C \to X
\]
such that
\begin{enumerate}
 \item $C$ has two irreducible components $C_{1}$ and $C_{2}$ meeting at the node $x$;
 \item $\uf|_{C_{1}}$ only meets $D$ at the node $x$ of contact order $c_{1}$;
 \item $\uf(C_{2}) \subset D$, and $\deg (\uf^*(N_{D/X}))_{|_{C_{2}}} = c_{2}$.
\end{enumerate}
Assume that $c_{1} + c_{2} \geq 0$. Then there is a stable log map $f: \lC/p^{\dagger} \to \lX$ over $\uf$ with a single marked point $\sigma \in C_{2}$ of contact order $c_{1} + c_2$. Furthermore, the log structure on $\lC \to p^{\dagger}$ is the canonical one as in \cite{FKato, LogCurve}.
\end{lemma}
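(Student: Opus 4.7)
The plan is to realize $f$ as a composition $g \circ f'$, where $g: \lY \to \lX$ is the contraction recalled before the statement and $f': \lC/p^\dagger \to \lY$ is a stable log map to be constructed. On $C_1$ we retain $\uf'|_{C_1} := \uf|_{C_1}$, viewed as a map into $X \subset Y$. On $C_2$ the task is to lift $\uf|_{C_2}: C_2 \to D$ along the projection $\PP \to D$ to a section $\uf'|_{C_2}: C_2 \to \PP$ that meets $D_0 \cong D$ only at the node $x$ with multiplicity $c_1$ and meets $D_\infty$ only at $\sigma$ with multiplicity $c_1+c_2$. After such a lift is glued to $\uf|_{C_1}$, the compatibility of contact orders at the node lets us invoke Lemma \ref{lem:simple-degenerate-gluing}, while the tangency with $D_\infty$ at $\sigma$ produces the desired marking as in Lemma \ref{lem:relative-smooth-out}.

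To produce the lift, recall that a section of $\PP|_{C_2} = \PP(L \oplus \cO_{C_2})$, where $L := \uf|_{C_2}^*(N_{D/X})$, corresponds to a line subbundle $M \hookrightarrow L \oplus \cO_{C_2}$ with components $\alpha: M \to L$ and $\beta: M \to \cO_{C_2}$, whose vanishing loci record the intersections with $D_\infty$ and $D_0$ respectively. Take $M := \cO_{C_2}(-c_1 \cdot x)$, let $\beta$ be the canonical section of $\cO_{C_2}(c_1 \cdot x)$ vanishing to order $c_1$ at $x$, and pick $\alpha \in H^0(L(c_1 \cdot x))$ with a zero of order $c_1+c_2$ at $\sigma$. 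Since $\deg L(c_1 \cdot x) = c_1 + c_2 \geq 0$ and $C_2 \cong \PP^1$, such an $\alpha$ exists and is unique up to scalar; because $\beta$ vanishes only at $x$ while $\alpha$ vanishes only at $\sigma \neq x$, the pair $(\alpha,\beta)$ has no common zero and so defines the required section. Combining with $\uf|_{C_1}$ yields $\uf': C \to Y$ satisfying $\uf'^{-1}(D_0) = \{x\}$ with matching contact order $c_1$ on both components, and with $\uf'|_{C_2}$ meeting $D_\infty$ only at $\sigma$ of order $c_1+c_2$.

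The construction of \cite[Section 5.2.3]{Kim} — the same machinery underlying Lemmas \ref{lem:simple-degenerate-gluing} and \ref{lem:relative-smooth-out} — then promotes $\uf'$ to a stable log map $f': \lC/p^\dagger \to \lY$ carrying the canonical log curve structure of \cite{FKato, LogCurve}. Composing with $g$ gives $f := g \circ f'$; since $g$ contracts $D_\infty$ onto $D$ as a morphism of log schemes, the tangency at $\sigma$ descends to contact order $c_1 + c_2$ with $D$ in $\lX$, as required. The main technical point I expect to handle is organizing the log structure on $\lY$ so that it simultaneously carries the simple-degeneration structure along $D_0$ (producing the node-gluing of Lemma \ref{lem:simple-degenerate-gluing}) and the pullback log structure from $\lX$ along $g$ (placing a nontrivial tangency on $D_\infty$, without which $\sigma$ could not record a log marking); once both structures are fused, the identification of contact orders reduces to a direct check on characteristic monoids under the canonical log curve construction and under the pushforward by $g$.
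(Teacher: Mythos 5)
Your proposal is correct and follows essentially the same route as the paper: the lift of $\uf|_{C_2}$ to $\PP(N_{D/X}\oplus\cO_D)$ is specified by exactly the same line-bundle data (the paper phrases it as a quotient $\cO\oplus\cO(-c_2)\to\cO(c_1)$ with vanishing loci $c_1\cdot x$ and $(c_1+c_2)\cdot\sigma$, which is the dual of your sub-bundle $\cO(-c_1\cdot x)\hookrightarrow L\oplus\cO$), after which Lemma \ref{lem:simple-degenerate-gluing} and composition with the contraction $g:\lY\to\lX$ finish the argument just as you describe.
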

\begin{proof}
We define a morphism of sheaves over $C_2$:
\[
\cO\oplus \cO(-c_2) \to \cO(c_1)
\]
where the arrow $\cO \to \cO(c_1)$ is defined by the effective divisor $c\cdot x$, and $\cO(-c_2) \to \cO(c_1)$ is defined by the effective divisor $(c_1 + c_2)\cdot \sigma$. This defines a morphism $C_{2} \to \PP$ tangent to $D_{\infty}$ and $D_{0}$ at $\sigma$ and $x$ with contact orders $c_{1}+c_{2}$ and $c_{1}$ respectively. We are in the situation of Lemma \ref{lem:simple-degenerate-gluing}. Thus, there is a stable log map $f'$ as in (\ref{diag:simple-deg-gluing}) over the underlying map $\uf$. The composition of $f:= f'\circ g$ yields the stable log map as in the statement.
\end{proof}

\begin{remark}
In Lemma \ref{lem:relative-gluing}, the marking $\sigma$ can be removed if $c_1 + c_2 = 0$.
\end{remark}

For the reader's convenience, we include the following result, which is known to experts:

\begin{lemma}\label{lem:smoothing}
Consider a genus zero log map 
\[
\xymatrix{
\lC \ar[r]^f \ar[d] & \lcX \ar[d] \\
\lS \ar[r] & \lB
}
\]
where the underlying $S$ is a geometric point, $\lcX \to \lB$ is a log smooth family, and the log structure of $\lB$ over the generic point is trivial. 
Assume $f^{*}T_{\lcX/\lB}$ is semi-positive. Let $f'$ be a general smoothing of $f$. Then 
\begin{enumerate}
 \item the source curve of $f'$ is irreducible;
 \item the map $f'$ only meets the locus $\partial \lcX$ with non-trivial log structure at the marked points.
\end{enumerate}
\end{lemma}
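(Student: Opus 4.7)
The plan is to combine Olsson's log deformation theory with a codimension argument on the moduli stack of stable log maps. The key point is that semi-positivity of $f^*T_{\lcX/\lB}$ on a genus zero nodal curve forces unobstructedness of the deformation functor, after which both conclusions follow from the observation that the ``bad'' loci (non-smoothed source, extraneous boundary intersection) are proper closed substacks of the moduli space near $[f]$.

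First I would establish unobstructedness. Since $\lcX \to \lB$ is log smooth, Olsson's log cotangent complex \cite{logcot} provides a tangent-obstruction theory for the moduli stack $\fM$ of stable log maps to $\lcX/\lB$, with obstruction group $H^{1}(C, f^{*}T_{\lcX/\lB})$. Because $C$ is genus zero (hence a tree of $\PP^{1}$'s) and $f^{*}T_{\lcX/\lB}$ is semi-positive --- i.e., its restriction to every component is a direct sum of line bundles of non-negative degree --- a Mayer--Vietoris argument on the normalization of $C$ yields $H^{1}(C, f^{*}T_{\lcX/\lB})=0$. Thus $\fM$ is smooth at $[f]$.

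For claim $(1)$, I would pass to the forgetful morphism $\fM \to \fM^{\log}_{0,n}$ to the moduli stack of pointed log curves. Each node of $C$ admits a non-trivial smoothing direction inside $\fM^{\log}_{0,n}$, and by the unobstructedness just established these lift to deformations of the stable log map. Using the hypothesis that the log structure of $\lB$ is generically trivial, one can choose a one-parameter deformation whose base has trivial log structure at the generic point; over this generic point, each node of $C$ is actually geometrically smoothed. Since smoothing the nodes is a non-trivial direction in $T_{[f]}\fM$, the nodal locus is a proper closed substack near $[f]$, and the generic deformation $f'$ has irreducible source. For claim $(2)$, once the source is irreducible the conservation of contact orders for stable log maps (\cite{Chen, AC, GS}) forces any intersection of $f'(C')$ with $\partial \lcX$ to occur at a marking with the prescribed contact order. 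Semi-positivity also produces, via $H^{0}(C, f^{*}T_{\lcX/\lB})$, deformation directions whose projection to the conormal bundle of $\partial \lcX$ is non-zero; hence any extraneous intersection of the image with the boundary (coming either from a component $C_i$ with $f(C_i)\subset \partial \lcX$ or from an isolated non-marked intersection point) can be deformed away, showing that the corresponding substack of $\fM$ has positive codimension.

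The main obstacle I anticipate is the careful treatment of a node $q \in C$ with $f(q) \in \partial \lcX$: smoothing $q$ in $\fM$ must be compatible with the log structure of the target along $\partial \lcX$, and in general such a node cannot be smoothed without also deforming the log structure on the base. The resolution is exactly the hypothesis that $\lB$ is generically log-trivial, combined with the local description of the log structure of $\lC$ at $q$ as in Lemma \ref{lem:simple-degenerate-gluing}: the smoothing parameter for $q$ can be activated while keeping the base log-trivial generically, and the global vanishing $H^{1}(C, f^{*}T_{\lcX/\lB})=0$ ensures that this simultaneous smoothing of all nodes lifts to an honest deformation of the log map.
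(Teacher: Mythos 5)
Your overall strategy is the same as the paper's, just unpacked by hand: the paper phrases the unobstructedness as smoothness of the map $\fK \to \cL og_{\fM\times \lB}$ at $[f]$ (which is exactly your vanishing $H^1(C,f^*T_{\lcX/\lB})=0$ for genus zero plus semi-positivity), and then concludes both (1) and (2) in one stroke from the fact that $\cL og_{\fM\times \lB}$ contains a dense open substack with trivial log structure --- a smooth morphism is open, so the general deformation lands over a smooth source curve and a base point with trivial log structure. Your treatment of (1), lifting the node-smoothing directions through the forgetful map to log curves, is a correct hand-made version of this, and you correctly identified the one genuinely delicate point (nodes mapping into $\partial\lcX$ can only be smoothed together with a deformation of the base log structure, which is where the generic triviality of $\cM_{\lB}$ enters).

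The one step I would not accept as written is your second justification of (2): ``deformation directions whose projection to the conormal bundle of $\partial\lcX$ is non-zero; hence any extraneous intersection of the image with the boundary \ldots can be deformed away.'' Taken literally this fails: the intersection number of the image curve with a boundary divisor is a deformation invariant, so an isolated non-marked intersection point of the underlying map cannot simply be pushed off the divisor, and no amount of positivity of $f^*T_{\lcX/\lB}$ produces such a deformation. The correct mechanism --- which your previous sentence already gestures at --- is purely log-structural: for a stable log map whose base has trivial log structure and whose source is smooth, the characteristic sheaf $\ocM_{C}$ is supported exactly on the markings, and a morphism of log structures cannot send a local equation of $\partial\lcX$ (a non-unit) to a unit; hence $\uf'^{-1}(\partial\lcX)$ is forced to be contained in the marked points, with the total tangency redistributed onto them. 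So (2) is automatic once you know the general deformation has smooth source and log-trivial base, which is what your argument for (1) (or the paper's density argument in $\cL og_{\fM\times\lB}$) already provides; the conormal-projection argument should be deleted rather than repaired.
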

\begin{proof}
Let $\fK$ be the moduli space of stable log maps, and $\fM$ be the moduli space of genus zero pre-stable curves with its canonical log structure. Then the semi-positivity of  $f^{*}T_{\lcX/\lB}$ implies the morphism of usual algebraic stacks
\begin{equation}\label{equ:removing-map}
\fK \to \cL og_{\fM\times \lB}
\end{equation}
is smooth at the point $[f] \in \fK$, see \cite[Section 2.5]{Chen}. Here $\cL og_{\fM\times \lB}$ is Olsson's log stack parameterizing log structures over $\fM\times \lB$, see \cite{Olsson-ENS}. By assumption $\cL og_{\fM\times \lB}$ contains an open dense sub-stack with trivial log structures. Thus, a general deformation $f'$ satisfies the conditions in the statement, see for example \cite[Section 3.2]{Chen}.
\end{proof}


\subsection{From Fano to log Fano via a simple degeneration}

\begin{proposition}\label{prop:fano-to-log}
Notations as in Lemma \ref{lem:degeneration}. Consider the two log Fano varieties $\lX_{i}$ associated to $(X_{i}, D)$ for $i=1,2$ as in Section \ref{ss:simple-degeneration}. If $\lX_1$ is separably $\AA^1$-connected, and $\lX_2$ is separably $\AA^1$-uniruled, then general fibers of (\ref{equ:target-degeneration}) are separably rationally connected.
\end{proposition}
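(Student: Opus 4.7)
The plan is to build a very free rational curve on a general smooth fiber of $\pi^{\dagger}$ by assembling log $\AA^1$-curves on the two components of the central fiber $Y^{\dagger} = X_1 \cup_D X_2$ into a single stable log map to $Y^{\dagger}$, and then smoothing to nearby fibers.

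First, I would pick a very free log $\AA^1$-curve $f_1 : \PP^1 \to \lX_1$ and a free log $\AA^1$-curve $f_2 : \PP^1 \to \lX_2$, each with a single marking of positive contact order, supplied by the two hypotheses. Using ampleness of $f_1^* T_{\lX_1}$ and semi-positivity of $f_2^* T_{\lX_2}$, I would deform the two maps so that their markings land at a common general point $p \in D$ lying in the log smooth locus of $\pi^{\dagger}$ provided by Lemma \ref{lem:degeneration}. To equalize contact orders at the would-be node, I would apply Lemma \ref{lem:relative-gluing} on the appropriate side, attaching an auxiliary rational curve inside $D$ of suitable normal degree, until the effective contact orders from $\lX_1$ and $\lX_2$ at $p$ match at a common integer $c$.

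An iterated application of Lemma \ref{lem:simple-degenerate-gluing} to the resulting underlying nodal map $\uf : C \to Y$ then produces a stable log map $f : \lC / p^{\dagger} \to Y^{\dagger}$, which by Remark \ref{rem:simple-deg-can} is equivalently a stable log map into the family $\pi^{\dagger}$ supported over the origin of $\AA^1$. The sheaf $f^* T_{\pi^{\dagger}}$ restricts to the ample $f_1^* T_{\lX_1}$ on the ``core'' component $C_1$ and to semi-positive bundles on every remaining component (namely $f_2^* T_{\lX_2}$ and the log tangent pullbacks on any auxiliary curves in $D$), so it is semi-positive on $C$ with an ample contribution from $C_1$. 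If needed to enforce ampleness after smoothing, I would further enlarge $C$ into a comb by attaching free $\AA^1$-lines from Proposition \ref{prop:line} on $\lX_2$ at general points, using the tameness hypothesis on $D$—this is the standard Kollár comb-smoothing input. Since the log structure on the base $\AA^1$ is trivial away from the origin, Lemma \ref{lem:smoothing} applies and a general deformation $f_t : \PP^1 \to \cX_t$ of $f$ has irreducible source; for $t \ne 0$ the fiber $\cX_t$ is smooth and carries trivial log structure, so $f_t$ is a genuine rational curve on a general smooth complete intersection fiber.

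Finally, I would upgrade the semi-positivity of $f_t^* T_{\cX_t}$ to ampleness by upper semicontinuity. Ampleness on $\PP^1$ is equivalent to $H^1(\PP^1, f_t^* T_{\cX_t}(-2))=0$, an open condition in $t$, and this vanishing is inherited from $H^1(C, f^* T_{\pi^{\dagger}} \otimes \cL) = 0$ for a limiting line bundle $\cL$ on $C$ of total degree $-2$. The latter vanishing is a Mayer--Vietoris computation: ampleness of $f_1^* T_{\lX_1}$ on the core $C_1$ kills $H^1$ there after the twist, semi-positivity handles the cohomology on the remaining components, and global generation of the free pullbacks supplies the surjectivity of node evaluations in the Mayer--Vietoris coboundary. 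The main obstacle will be precisely this bookkeeping—matching contact orders at the glue node and verifying that the ``ample core plus semi-positive periphery'' specializes to a genuinely very free curve—which is the logarithmic avatar of the classical comb-smoothing technique in \cite{Kollar}.
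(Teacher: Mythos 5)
Your overall architecture is the same as the paper's: take a very free $\AA^1$-curve on $\lX_1$ and a free $\AA^1$-curve on $\lX_2$, arrange that their markings meet at a common general point of $D$, glue to a stable log map to the central fiber via Lemma \ref{lem:simple-degenerate-gluing}, and smooth using Lemma \ref{lem:smoothing} together with the ``ample on $C_1$, semi-positive on the rest'' cohomology vanishing. Your closing Mayer--Vietoris computation is essentially the content of Proposition \ref{prop:positivity}, and the comb of extra free lines you propose to attach is not needed: $h^1(C, f^*T(-y_1-y_2))=0$ already follows from ampleness on $C_1$ and semi-positivity on $C_2$ alone.

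The genuine problem is your mechanism for equalizing the contact orders at the node. You propose to use Lemma \ref{lem:relative-gluing} to insert an auxiliary rational curve mapping \emph{into} $D$ so as to raise the effective contact order on one side. But once such a component is present, the underlying curve has a component with image in $D$, so the hypothesis $\uf^{-1}(D)=x$ of Lemma \ref{lem:simple-degenerate-gluing} fails and you can no longer invoke that lemma (iterated or not) to produce the log map to the simple degeneration $Y^{\dagger}=X_1\cup_D X_2$; you would need a genuinely more general gluing statement for log maps to a normal crossings degeneration with components in the singular locus, together with the attendant matching conditions. There is also a sign issue: for a simple degeneration with smooth total space along $D$ one has $N_{D/X_1}\cong N_{D/X_2}^{\vee}$, so an auxiliary curve in $D$ that raises the contact order seen from $X_1$ simultaneously lowers the one seen from $X_2$, and it is not clear your iteration terminates at a common value. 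The paper sidesteps all of this with a one-line fix: precompose $f_1$ and $f_2$ with generically \'etale multiple covers of $\PP^1$ totally ramified at the markings $\sigma_i$, which multiplies the contact orders (so they can be equalized at the least common multiple) while preserving very-freeness, respectively freeness, of the pulled-back log tangent bundle; the resulting configuration then satisfies the hypotheses of Lemma \ref{lem:simple-degenerate-gluing} verbatim. You should replace your contact-order step with this device, after which the rest of your argument goes through.
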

\begin{proof}
By assumption, we may take a very free $\AA^1$ curve $f_1:\lC_1 \rightarrow \lX_1$, and a free $\AA^1$ curve $f_2: \lC_2 \rightarrow \lX_2$.  Write $\sigma_i$ for the unique marking on $\lC_i$ for $i=1,2$.  Note that the marking of the free log map sweep out general points on the boundary divisor \cite[ Corollary 5.5(3)]{KM}. We may assume that  $f_1(\sigma_1)=f_2(\sigma_2)$, and $f_i(\lC_i \setminus \{\sigma_i\}) \cap D_i = \emptyset$ for $i=1,2$.

After composing $f_i$ with some generically \'etale multiple cover by rational curves ramified at $\sigma_i$, we may assume that $f_1$ and $f_2$ have the same contact orders along the common boundary. By Lemma \ref{lem:simple-degenerate-gluing}, we may glue $f_1$ and $f_2$ along the markings, and obtain a stable log map $f: \lC \to \lX$ where the underlying curve $C$ is a rational curve with one node obtained by gluing $C_{1}$ and $C_{2}$ along the markings.  

Since the pullback of the log tangent bundles $f_1^*T_{\lX_{1}}$ and $f_2^*T_{\lX_2}$ are at least semi positive, there exists a smoothing $f'$ of $f$ to the general fiber of the one parameter degeneration by Lemma \ref{lem:smoothing}. Since $f_1^*T_{\lX_{1}}$ is ample by assumption, a general smoothing $f'$ is very free.
\end{proof}

\section{Reduction to the Fano boundary}

\subsection{Separably $\A^1$-uniruledness}
The following can be found in \cite[5.2]{KM}. For completeness, we include the proof here.

\begin{lemma}\label{lem:divisor-ambient}
Let $X^{\dagger}$ be a log smooth scheme given by a normal crossing pair $(X,D=\sum_{i=1}^k D_i)$. Then we have an exact sequence
\begin{equation}\label{equ:divisor-ambient-tangent}
0 \to \cO_{D_i} \to T_{X^\dagger}|_{D_i} \to T_{D_i^\dagger} \to 0,
\end{equation}
where $D_i^\dagger$ is given by the pair $(D_i, \sum_{j\neq i}D_j|_{D_i})$.
\end{lemma}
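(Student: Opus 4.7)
The plan is to prove the dual statement for log cotangent bundles first, namely the log Poincaré residue sequence
\[
0 \to \Omega_{D_i^\dagger} \to \Omega_{X^\dagger}|_{D_i} \to \cO_{D_i} \to 0,
\]
and then dualize, using that all three sheaves in question are locally free (since $(X,D)$ is log smooth and $D_i^\dagger$ inherits a log smooth structure).

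To construct the residue sequence, I would argue locally. Around a point of $D_i$, choose étale coordinates $x_1,\ldots,x_n$ on $X$ such that $D_j = \{x_j = 0\}$ for $j = 1,\ldots,k$ and $i \leq k$. Then $\Omega_{X^\dagger}$ is the free $\cO_X$-module with basis
\[
\frac{dx_1}{x_1},\ \ldots,\ \frac{dx_k}{x_k},\ dx_{k+1},\ \ldots,\ dx_n.
\]
Define the residue map $\mathrm{Res}_{D_i}\colon \Omega_{X^\dagger}|_{D_i} \to \cO_{D_i}$ by sending $\frac{dx_i}{x_i} \mapsto 1$ and killing the other generators; its kernel is spanned by $\bigl\{\tfrac{dx_j}{x_j}\bigr\}_{j\neq i,\,j\leq k}$ together with $dx_{k+1},\ldots,dx_n$, restricted to $D_i$, which is precisely $\Omega_{D_i^\dagger}$ with its divisorial log structure from $\sum_{j\neq i}D_j|_{D_i}$.

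The main (mild) obstacle is checking that $\mathrm{Res}_{D_i}$ is independent of the local coordinate choice and therefore glues to a global map; this is standard and follows from the fact that any two choices of local defining equation for $D_i$ differ by a unit $u$, and $\frac{d(ux_i)}{ux_i} = \frac{du}{u} + \frac{dx_i}{x_i}$, with $\frac{du}{u}$ restricting to a section of $\Omega_{D_i^\dagger}$. Alternatively, one can define $\mathrm{Res}_{D_i}$ intrinsically using the monoid generator of $\ocM_{X^\dagger}$ corresponding to $D_i$: pulling back along $D_i \hookrightarrow X$ and taking the induced map on the associated characteristic sheaves yields a canonical quotient $\Omega_{X^\dagger}|_{D_i} \twoheadrightarrow \cO_{D_i}$.

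Having established the cotangent sequence, I dualize. Local freeness of all three terms ensures that dualizing preserves exactness, and the dual of $\cO_{D_i}$ is canonically $\cO_{D_i}$, yielding the desired exact sequence
\[
0 \to \cO_{D_i} \to T_{X^\dagger}|_{D_i} \to T_{D_i^\dagger} \to 0.
\]
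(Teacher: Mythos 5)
Your proof is correct, and its core strategy---establish the log residue (Poincar\'e) sequence $0 \to \Omega_{D_i^\dagger} \to \Omega_{X^\dagger}|_{D_i} \to \cO_{D_i} \to 0$ of locally free sheaves and then dualize---is exactly the paper's. The only real difference is how that sequence is produced: the paper starts from the global sequence $0 \to \Omega_{Z^\dagger} \to \Omega_{X^\dagger} \to \cO_{D_i} \to 0$ on all of $X$ (with $Z^\dagger$ the log scheme $(X,\sum_{j\neq i}D_j)$), tensors with $\cO_{D_i}$, and identifies the resulting $\Tor_1^{\cO_X}(\cO_{D_i},\cO_{D_i})$ term with the conormal bundle $N^{\vee}_{D_i/X}$, which sidesteps your coordinate-independence check for the residue map; you instead build the residue map in \'etale coordinates and glue. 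Both routes are standard and complete---yours is more self-contained and makes the kernel visibly equal to $\Omega_{D_i^\dagger}$, while the paper's is shorter and purely homological.
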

\begin{proof}
Write $Z^{\dagger}$ to be the log scheme given by $(X,\sum_{j \neq i }D_{j})$. Consider the exact sequence over $X$:
\[
0 \to \Omega_{Z^{\dagger}} \to \Omega_{\lX} \to \cO_{D_i} \to 0
\]
Applying $\otimes\cO_{D_i}$ to the above sequence, we have
\[
0\to \Tor_{1}^{\cO_{X}}(\cO_{D_{i}}, \cO_{D_{i}}) \to \Omega_{Z^{\dagger}}|_{D_{i}} \to \Omega_{\lX}|_{D_{i}} \to \cO_{D_{i}} \to 0.
\]
Note that $\Tor_{1}^{\cO_{X}}(\cO_{D_{i}}, \cO_{D_{i}}) = N^{\vee}_{D_{i}/X}$. Now the statement follows from taking the dual of the above exact sequence. 
\end{proof}

\begin{lemma}\label{lem:log-uniruled}
Notations as in Lemma \ref{lem:divisor-ambient}, assume that there exists $D_i^\dagger$ such that it is separably $\A^1$-uniruled, and $\deg f^*N_{D_{i}} > 0$ for some free $\AA^1$-curve $f: Z^{\dagger} \to D_i^\dagger$, then $\lX$ is separably $\A^1$-uniruled.

When $D$ is a smooth irreducible ample divisor which is separably uniruled, $X^\dagger$ is separably $\A^1$-uniruled.
\end{lemma}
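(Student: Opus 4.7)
The plan is to lift the given free $\AA^1$-curve $f \colon Z^\dagger \to D_i^\dagger$ (resp.\ the free rational curve $\underline f \colon \PP^1 \to D$ in the second statement) to a log map $\tilde f \colon \lC \to \lX$ whose underlying map factors through the inclusion $\iota \colon D_i \hookrightarrow X$, and then to verify semi-positivity of $\tilde f^* T_{\lX}$ via the exact sequence of Lemma \ref{lem:divisor-ambient}.

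First I would construct $\tilde f$ as a normal-crossings analogue of Lemma \ref{lem:relative-smooth-out}. Since $c_i := \deg f^* N_{D_i/X} > 0$, the $\PP^1$-bundle construction on $\PP(N_{D_i/X}\oplus \cO_{D_i})$ used in the proof of that lemma yields a log lift of $\iota \circ \underline f$ to a log map into $(X, D_i)$ carrying a single marking of contact order $c_i$ with $D_i$. Placing this new marking at the existing marking $\sigma$ of the original $\AA^1$-curve $f$, the pre-existing contact orders with $D_j$ ($j \ne i$) at $\sigma$ assemble with $c_i$ into the data of a stable log map $\tilde f \colon \lC \to \lX$ with a single marked point $\sigma$. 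The tangency at $\sigma$ is non-trivial: in the first statement, $f$ already has non-trivial tangency by assumption; in the second statement $c_i > 0$ does the job.

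To verify that $\tilde f^* T_{\lX}$ is semi-positive, observe that because $\underline{\tilde f} = \iota \circ \underline f$ factors through $D_i$, one has $\tilde f^* T_{\lX} = \underline f^*(T_{\lX}|_{D_i})$ as coherent sheaves on $C$. Pulling back the exact sequence of Lemma \ref{lem:divisor-ambient} then gives a short exact sequence of locally free sheaves on $C \cong \PP^1$:
\[
0 \to \cO_C \to \tilde f^* T_{\lX} \to f^* T_{D_i^\dagger} \to 0.
\]
The hypothesis that $f$ is a free $\AA^1$-curve on $D_i^\dagger$ (or, in the second statement, that $\underline f$ is a free rational curve on the separably uniruled $D$) implies the right-hand term is semi-positive. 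The conclusion then follows from the elementary fact that on $\PP^1$, any extension of a semi-positive locally free sheaf by $\cO_{\PP^1}$ is semi-positive: writing $E = \bigoplus_j \cO(b_j)$ and splitting $E = E^+ \oplus E^-$ into the sums of summands of non-negative and of negative degree respectively, any injection $\cO \hookrightarrow E$ is a nowhere-vanishing section, which must lie in $E^+$ since $H^0(\cO(b_j)) = 0$ for $b_j < 0$; consequently $E^-$ appears as a direct summand of the quotient $E/\cO$, forcing $E^- = 0$ by semi-positivity of the quotient.

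The main subtlety I expect is the careful construction of $\tilde f$ in the normal crossings setting: one must check that the $\PP^1$-bundle construction, carried out relative to $(X, D_i)$, combined with the contacts inherited from $f$ along $\sum_{j \ne i} D_j|_{D_i}$ at $\sigma$, assembles into a bona fide stable log map into $\lX$. Once $\tilde f$ is in place, Lemma \ref{lem:divisor-ambient} together with the splitting observation above makes the semi-positivity check clean, and $\tilde f$ is then the desired free $\AA^1$-curve witnessing separable $\AA^1$-uniruledness of $\lX$.
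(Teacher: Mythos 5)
Your proposal follows essentially the same route as the paper: lift the free $\AA^1$-curve on $D_i^\dagger$ into $\lX$ via the $\PP^1$-bundle construction of Lemma \ref{lem:relative-smooth-out}, then deduce semi-positivity of the pulled-back log tangent bundle from the exact sequence of Lemma \ref{lem:divisor-ambient} and the fact that an extension of a semi-positive bundle by $\cO_{\PP^1}$ is semi-positive. The one subtlety you flag but leave open --- assembling the lift in the normal crossings setting --- is resolved in the paper by producing a log map to each $\lX_j = (X, D_j)$ separately (using that the image is not contained in $D_j$ for $j \neq i$) and taking the product over the isomorphism $\lX \cong \lX_1 \times_X \cdots \times_X \lX_k$.
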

\begin{proof}
We will give a proof of the first statement. The second statement can be proved similarly.

By the assumption and log deformation theory, we may choose a free $\AA^1$-curve $f': Z^{\dagger} \to D_{i}^{\dagger}$ such that 
\begin{enumerate}
 \item the underlying source curve $Z \cong \PP^1$ is irreducible with a unique marking $\sigma \in Z$;
 \item $f'(Z) \not\subset D_{j}$ for any $j \neq i$.
 \item $\deg f'^*N_{D_i|X}>0$.
\end{enumerate}

For each $j$, consider another log scheme $\lX_j$ given by the pair $(X, D_j)$. By Lemma \ref{lem:relative-smooth-out}, we could lift $f'$ to a genus zero stable log map $f''_i: C^{\dagger}/S^{\dagger} \to \lX_i$ with the unique marking $\sigma$. Since the image of $f''_i$ is not contained in $D_j$ for any $j \neq i$, we obtain a stable log map $Z^{\dagger} \to \lX_{j}$ with the same underlying map given by $f'$, and a unique marking $\sigma$, possibly having the trivial contact order. Consider the composition
\[
f''_j: \lC \to Z^{\dagger} \to \lX_j.
\]
Now the product
\[
f:=\prod_{j=1}^k f''_j : \lC \to \lX_1\times_{X}\cdots\times_{X}\lX_k \cong \lX
\]
defines an $\AA^1$-curve in $\lX$. Using Lemma \ref{lem:divisor-ambient}, we could check that $f$ is a free $\AA^1$-curve. This finishes the proof.
\end{proof}

\subsection{Separably $\A^1$-connectedness}\label{ss:log-fano}
The goal of this section is to prove the following:

\begin{proposition}\label{prop:log-to-usual}
Let $\lX$ be a general log Fano $( d_{1}, \cdots, d_{l}; d_b)$-complete intersection given by the pair $(X,D)$ as in Section \ref{ss:log-pair}. If $D$ is separably rationally connected and $\ch \kk \nmid d_b$, then $\lX$ is separably $\AA^1$-connected.

\end{proposition}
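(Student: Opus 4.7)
The approach is to glue a free $\AA^1$-line (from Proposition \ref{prop:line}) with a very free rational curve in the boundary $D$ (provided by the SRC hypothesis on $D$), then smooth the resulting nodal log map using Lemma \ref{lem:smoothing}. By Proposition \ref{prop:line}, since $\ch\kk \nmid d_b$, there exists a free $\AA^1$-line $f_L : \lC_1 \to \lX$ with unique marking $\sigma_1$ of contact order $d_b$ and splitting $f_L^*T_\lX|_{C_1} \cong \cO(1)^{\oplus a} \oplus \cO^{\oplus b}$, where $a = n+1-e$ and $b = e-l-1$. Since $N_{D/X} \cong \cO_X(d_b)|_D$ is ample and $D$ is SRC, I may pick a very free rational curve $g : \PP^1 \to D$ of sufficiently high degree so that $g^*T_D$ is ample and $c := \deg g^*N_{D/X} > 0$. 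Using the density of markings of free $\AA^1$-curves in $D$ (\cite[Corollary 5.5(3)]{KM}), I arrange $f_L(\sigma_1) = g(q_0) = p$ for some $q_0 \in \PP^1$ and apply Lemma \ref{lem:relative-gluing} to obtain a stable log map $f : \lC \to \lX$ with $C = C_1 \cup_p \PP^1$ and single marking $\sigma$ on the $\PP^1$-component of contact order $d_b + c > 0$.

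The key cohomological step will be the vanishing $H^1(C, E(-q_1 - q_2)) = 0$ for $E := f^*T_\lX$ and generic smooth points $q_1 \in C_1$, $q_2 \in \PP^1$. By Lemma \ref{lem:divisor-ambient}, the bundle $E|_{\PP^1}$ fits in an exact sequence $0 \to \cO \to E|_{\PP^1} \to g^*T_D \to 0$ with $g^*T_D$ ample, so $E|_{\PP^1}$ is semi-positive with at most one $\cO$-summand in its splitting on $\PP^1$. The twisted restrictions $E|_{C_1}(-q_1) = \cO^{\oplus a} \oplus \cO(-1)^{\oplus b}$ and $E|_{\PP^1}(-q_2)$ then both have all summands of degree $\geq -1$, so $H^1$ vanishes on each component. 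From the normalization sequence, what remains is the surjectivity of the evaluation map $H^0(E|_{C_1}(-q_1)) \oplus H^0(E|_{\PP^1}(-q_2)) \to E|_p$: the first summand contributes the $a$-dimensional ``ample subspace'' $V_L \subset E|_p$ coming from the $\cO(1)^{\oplus a}$-subbundle of $E|_{C_1}$, while the second contributes the $(n-l-1)$-dimensional subspace $V_R \subset E|_p$ complementary to the $\cO$-summand of $E|_{\PP^1}$. For sufficiently general $L$ and $g$, a dimension count using the log Fano inequality $n \geq e$ yields $V_L + V_R = E|_p$.

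Finally, since $E$ is semi-positive on each component, Lemma \ref{lem:smoothing} produces a deformation $f' : (\PP^1, \sigma) \to \lX$ of $f$ with smooth source meeting $D$ only at the marking $\sigma$. Combined with the vanishing above and upper-semicontinuity of $h^1$ in the smoothing family, one obtains $h^1(\PP^1, f'^*T_\lX(-q_1 - q_2)) = 0$, which on $\PP^1$ is equivalent to $f'^*T_\lX$ being ample; hence $f'$ is a very free $\AA^1$-curve, proving $\lX$ is separably $\AA^1$-connected. The hardest part will be establishing the generic surjectivity in the middle step, i.e., the transversality of $V_L$ and $V_R$ inside $E|_p$, which requires exploiting the ample deformation spaces of both the line $L$ (via Proposition \ref{prop:line}) and the curve $g$ (via SRC of $D$) to place these two subspaces in general position.
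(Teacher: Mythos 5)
Your overall architecture matches the paper's: glue the free $\AA^1$-line of Proposition \ref{prop:line} to a very free rational curve in $D$, smooth the resulting log map, and prove ampleness of the pullback of $T_{\lX}$ on the general fiber by an $H^1$-vanishing on the central fiber plus semicontinuity. Your bookkeeping is a legitimate variant: you put one twisting point on each component and run the normalization sequence directly, whereas the paper puts both points on the boundary curve and first performs an elementary transformation of $T_{\lX}$ along the positive part of its restriction to the line (Construction \ref{cons}--Proposition \ref{prop:positivity}), which converts the trivial summand on the boundary component into a positive one. Both reduce to the same input: the spanning of $T_{\lX}|_p$ by your $V_L$ and $V_R$.

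The genuine gap is exactly the step you defer: the transversality $V_L + V_R = E|_p$ is \emph{not} a general-position statement, and neither a dimension count using $n \geq e$ nor deforming $L$ and $g$ will deliver it. Indeed $\dim V_L + \dim V_R \geq \dim E|_p$ is compatible with $V_L \subseteq V_R$, and genericity arguments would require knowing how $V_L$ moves as $L$ varies, which is no easier than the original problem. What actually makes the spanning work is structural: in the problematic case the sequence $0 \to \cO \to T_{\lX}|_D \to T_D \to 0$ restricted to $g$ splits, $V_R$ is the fiber of the unique maximal ample subbundle and is therefore a hyperplane \emph{transverse} to the one-dimensional kernel of $T_{\lX}|_p \to T_D|_p$; on the other side, the image of $T_{\lC_2}|_p$ under $\dif f_L$ lies in $V_L$, \emph{lands in that kernel} (Lemma \ref{lem:zero-projection}, since the log tangent line of the marked $\PP^1$ maps into the $\cO_D$-direction), and is \emph{nonzero} precisely because $(\dif f_L)^\vee(dg/g) = d_b\, dt/t \neq 0$ when $\ch\kk \nmid d_b$ (Lemma \ref{lem:tame-non-vanishing}). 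So $V_L$ always contains a vector that $V_R$ can never contain, and the two span. Note this is a second, essential use of the tameness hypothesis beyond the existence of the free line: in characteristic dividing $d_b$ the differential of $f_L$ at the marking degenerates and the spanning can fail. Without this local computation your proposal does not close.
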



\begin{proof}
Choose a very free rational curve $\uf_1: C_1 \cong \PP^1 \to D$. Let $\sigma, \sigma_1$ be two general points on $C_1$. Since $\ch \kk \nmid d_b$, we may choose a log free line $f_2: \lC_2 \to \lX$ constructed in Proposition \ref{prop:line} with the unique marking $\sigma_2$ having image $\uf_1(\sigma_1)$. By Lemma \ref{lem:relative-gluing}, we may glue $\uf_1$ and $\uf_2$ by identifying $\sigma_{1}$ and $\sigma_2$, and obtain a stable log map $f:C^\dagger\rightarrow X^\dagger$ with one marking $\sigma$ and one node $p$.

If we restrict (\ref{equ:divisor-ambient-tangent}) to $C_1$, there are two possibilities:
\begin{enumerate}
\item $T_{X^\dagger}|_{C_1}$ is ample.
\item $T_{X^\dagger}|_{C_1}$ is a trivial extension of $T_D|_{C_1}$ by $\O_{C_1}$.
\end{enumerate}

In the first case, a general smoothing $f$ is very free by Lemma \ref{lem:smoothing}. In the second case, $T_{X^\dagger}|_{C_1}$ is only semi positive. 

Consider the composition

\begin{equation} \label{equ:curve-target-boundary}
\xymatrix{
T_{\lC_2}|_p \ar[r]^{d f_2} & T_{X^\dagger}|_p \ar[r]^\delta & T_{D}|_p.\\
}
\end{equation}  

\begin{lemma}\label{lem:tame-non-vanishing}
The push-forward morphism $\mbox{d} f_2$ is injective when $\ch \kk \nmid d_b$. 
\end{lemma}

\proof It suffices to show that pullback morphism $(\mbox{d} f_2)^{\vee}:\Omega_{X^\dagger}|_p\rightarrow \Omega_{C^\dagger}|_p$ is surjective. We check this using a local computation. Locally at $p$, there is a log $1$-form $dg/g$ where $g$ is the defining equation of the boundary. Since the image of $C_2$ is a log free line, $(\mbox{d} f_2)^{\vee} (dg/g)=d_b \cdot dt/t\neq 0$. \qed

\begin{lemma}\label{lem:zero-projection}
The composite morphism (\ref{equ:curve-target-boundary}) is a zero morphism.
\end{lemma}
\begin{proof}
Applying Lemma \ref{lem:divisor-ambient} to both $\lC_2$ and $\lX$, and restricting to $\sigma$, we have
the commutative diagram:
\[
\xymatrix{
0 \ar[r] & \kk_{\sigma_2} \ar[r]^{\cong} \ar[d] &  T_{\lC_2}|_{\sigma_2} \ar[d] \ar[r] & 0 \\
0 \ar[r] & \cO_{D}|_{\sigma_2} \ar[r] & T_{\lX}|_{\sigma_2} \ar[r] &  T_{D}|_{\sigma_2} \ar[r] & 0
}
\]
The statement then follows.
\end{proof}

Let $E$ be the codimension one vector subspace in $T_{X^\dagger}|_p$ which corresponds to $T_D$. To make a log very free curve, it suffices to increase the positivity outside $E$. By Proposition \ref{prop:line}, the splitting type of $T_{X^\dagger}|_{C_2}$ is $\O(1)^{\oplus{(n+1-e)}} \oplus \O^{\oplus({e-l-1})}$. Let $E'$ be the canonical subspace of $T_{X^\dagger}|_p$ which corresponds to the factor $\O(1)^{\oplus{(n+1-e)}}$. By Lemma \ref{lem:tame-non-vanishing}, $E'$ contains the log tangent direction $T_{\lC_2}|_p$.

By Lemma \ref{lem:zero-projection}, $E'$ as a vector subspace in $f^*T_{X^\dagger}|_{p}$ is contained in the kernel of $T_{\lX}|_p \to T_{D}|_p$. Since $E$ is of codimension one, the two vector subspaces $E'$ and $E$ span $T_{X^\dagger}|_p$. 

Since $f$ is unobstructed with the canonical log structure $\lC \to p^{\dagger}$ on the source log curve, this implies that the composition
\[
\fK \to \cL og_{\fM} \to \fM
\]
is smooth at the point $[f]$. Here the first arrow is given by (\ref{equ:removing-map}) with $B^{\dagger}$ a geometric point with the trivial log structure.  We may thus take a general smoothing of $f$ with the total space smooth. Proposition \ref{prop:log-to-usual} then follows from Proposition \ref{prop:positivity} below.
\end{proof}

\begin{proof}[Proof of Theorem \ref{thm:main-log-fano-char-zero}]
By adjunction, $D$ is Fano, and hence separably rationally connected in characteristic zero. We may then choose a very free rational curve $\uf: C \to D$ through general points of $D$. Now the theorem is proved by gluing $\uf$ with a free $\AA^1$-curve in $(X,D)$ with sufficiently large intersection number with $D$, and applying the same argument as in Proposition \ref{prop:log-to-usual}.
\end{proof}

\begin{proof}[Proof of Corollary \ref{cor}]
By Lemma \ref{lem:log-uniruled} and Theorem \ref{thm:main-log-fano-char-zero}, it suffices to show that there exists a free rational curve $f:\P^1\to D$ such that $\deg f^*N_D>0$. Indeed, by adjunction formula, $D$ is Fano, hence rationally connected. Let $E\subset D$ be the effictive divisor determined by $N_D$. A very free rational curve passing through a point in $E$ but not lying on $E$ will do the job.
\end{proof}

\subsection{A result from elementary transform}

\begin{construction}\label{cons}
Let $C$ be the union of two irreducible rational curves $C_1$ and $C_2$ glued at a node $p$. Let $q:\cC  \rightarrow T$ be a smoothing of $C$ with $C$ the fiber over $0 \in T$. Assume that the total space $\cC$ is a smooth surface. Let $s_1$ and $s_2$ be two sections of $q$ both of which specialize to two distinct points $y_1, y_2$ on $C_1$. 
Consider a locally free sheaf $\E$ of rank $r$ on $\cC$, satisfying the following property:
\begin{enumerate}
\item $\E|_{C_1}$ is isomorphic to $\O\oplus \F$, where $\F$ is a positive sub-bundle. Let $E$ be the canonical codimension one subspace of $\E|_p$ which corresponds to $\F$.
\item $\E|_{C_2}\cong \T\oplus\O^{\oplus r-k}$, where $1\le k\le r$ and $\T$ is positive. Let $E'$ be the caonical subspace of $\E|_p$ which corresponds to $\T$.
\item $E'$ and $E$ span $\E|_p$.
\end{enumerate}
\end{construction}

Consider the following composition:
\begin{equation}r:\E^\vee\rightarrow \E^\vee|_{C_2}\rightarrow \T^\vee\end{equation}

Clearly $r$ is surjective. Let $\K^\vee$ be the kernel of $r$, i.e., the elementary transform of $\E^\vee$ along $\T^\vee$.  Consider the induced exact sequence:
\begin{equation}\label{dual}
0 \to \K^\vee \to \E^\vee \to \T^\vee \to 0
\end{equation}
Dualizing the above short exact sequence over $\cC$, we get a long exact sequence
\[
0 \to \hom_{\O_\cC}(\cT^\vee,\O_{\cC})  \to \E \to \K \to \underline{\ext}^1_{\O_{\cC}}(\T^\vee, \O_{\cC}) \to 0.
\]

The first term vanishes because it is the dual of a torsion sheaf. The last term is isomorphic to $\T\otimes_{\O_{C_2}}\O_{C_2}(C_2)$ by \cite[A3.46 b]{Eisenbud} and 
\[\ext^1_{\O_\cC}(\O_{C_2},\O_\cC)\cong \O_{C_2}(C_2).\] 
Thus we obtained a short exact sequence
\begin{equation}\label{undual}
0 \to \E  \to \K \to \T\otimes_{\O_{C_2}}\O_{C_2}(C_2)  \to 0.
\end{equation}


\begin{lemma}\label{teeth+}
$h^1(C_2, \K|_{C_2}(-p))=0$.
\end{lemma}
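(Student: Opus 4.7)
The plan is to describe $\K|_{C_2}$ via a short exact sequence on $C_2 \cong \P^1$ and then read off the cohomology. The first ingredient is the identification $\O_{C_2}(C_2) \cong \O_{C_2}(-p)$: since $\cC \to T$ is a smoothing, the fiber $C_1 + C_2$ is principal on $\cC$, so $\O_\cC(C_2) \cong \O_\cC(-C_1)$, and restricting to $C_2$ (where $C_1$ meets $C_2$ transversely at $p$) gives $\O_{C_2}(-p)$. In particular $N^*_{C_2/\cC} \cong \O_{C_2}(p)$.

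Next I would restrict the defining sequence (\ref{dual}) to $C_2$. Since $\O_{C_2}$ admits a length-one Koszul resolution on $\cC$ and $\T^\vee$ is annihilated by $I_{C_2}$, the Tor computation yields $\Tor_1^{\O_\cC}(\T^\vee, \O_{C_2}) \cong \T^\vee \otimes N^*_{C_2/\cC} \cong \T^\vee(p)$, with higher Tors vanishing. This produces a four-term exact sequence
$$0 \to \T^\vee(p) \to \K^\vee|_{C_2} \to \E^\vee|_{C_2} \to \T^\vee \to 0.$$
By the construction of the elementary transform, the right-hand surjection is dual to the canonical inclusion $\T \hookrightarrow \T \oplus \O^{\oplus r-k} = \E|_{C_2}$, so its kernel is the complementary direct summand $\O^{\oplus r-k}$. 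Dualizing the truncated short exact sequence $0 \to \T^\vee(p) \to \K^\vee|_{C_2} \to \O^{\oplus r-k} \to 0$ over $C_2$ (exactness is preserved since $\O^{\oplus r-k}$ is locally free) then yields
$$0 \to \O^{\oplus r-k} \to \K|_{C_2} \to \T(-p) \to 0.$$

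Finally, I twist by $\O(-p)$ and pass to cohomology. Since $C_2 \cong \P^1$ and $\O(-p) \cong \O_{\P^1}(-1)$, the vanishing $h^1(\O(-p)^{\oplus r-k}) = 0$ is immediate. For the other term, positivity of $\T$ means $\T \cong \bigoplus_i \O(a_i)$ with each $a_i \geq 1$, so $\T(-2p) = \bigoplus_i \O(a_i - 2)$ with all exponents at least $-1$, giving $h^1(\T(-2p)) = 0$. The long exact sequence then forces $h^1(C_2, \K|_{C_2}(-p)) = 0$. The one place to be careful is justifying that the restricted map $\E^\vee|_{C_2} \to \T^\vee$ is precisely projection onto the distinguished summand — this reflects that the elementary transform was set up along $\T^\vee$ rather than another quotient — after which the rest is routine cohomology on $\P^1$.
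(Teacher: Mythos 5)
Your argument is correct and follows essentially the same route as the paper: both restrict the defining sequence (\ref{dual}) to $C_2$, track the correction term $\T^\vee(p)$ (the paper does this via $\Hom_{\O_{C_2}}(-,\O_{C_2})$ combined with (\ref{undual}), you via a direct $\Tor_1$ computation with the Koszul resolution), arrive at the same short exact sequence $0 \to \O^{\oplus (r-k)} \to \K|_{C_2} \to \T(-p) \to 0$, and conclude by twisting by $-p$ and using vanishing of $H^1$ on $\P^1$. No gaps.
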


\proof Restricting the short exact sequence (\ref{dual}) to $C_2$, applying the functor $\Hom_{\O_{C_2}}(\  * \  ,\O_{C_2})$, and combining with (\ref{undual}),  we obtain
\[
0 \to \T \to \E|_{C_2}  \to \K|_{C_2} \to  \cT(-p)  \to 0.
\]
The quotient bundle $\E|_{C_2}/\T$ is a trivial vector bundle and the last term of the exact sequence is isomorphic to $\T(-p)$. In particular, we have 
\[
0 \to \O_{C_2}(-p)^{\oplus (r-k)}   \to \K|_{C_2}(-p) \to \T(-2p)  \to 0.
\]
The lemma follows from the vanishing of $H^1$ of the first and the third term of the above sequence.\qed 

\begin{lemma}\label{handle+}
$h^1(C_1,\K|_{C_1}(-y_1-y_2))=0$.
\end{lemma}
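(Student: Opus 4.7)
The plan is to show that $\K|_{C_1}$ is ample on $C_1\cong\PP^1$---that every Grothendieck summand has degree at least $1$---since then $\K|_{C_1}(-y_1-y_2)=\K|_{C_1}\otimes\O(-2)$ splits into line bundles of degree $\ge -1$ and $h^1$ vanishes for degree reasons.

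First I would restrict the sequence $(\ref{undual})$ to $C_1$. The quotient $\T\otimes_{\O_{C_2}}\O_{C_2}(C_2)$ is supported on $C_2$ and meets $C_1$ only at the node $p$. A short local computation at $p$ (using the Koszul-type resolution of $\O_{C_1}$ by the local equation of $C_1$ on $\cC$) should show both that $(\T\otimes\O_{C_2}(C_2))\otimes_{\O_\cC}\O_{C_1}$ is the skyscraper $\T|_p$ and that the associated $\Tor_1^{\O_\cC}$ term vanishes. Tensoring $(\ref{undual})$ with $\O_{C_1}$ then produces
\[
0\to \E|_{C_1}\to \K|_{C_1}\to \T|_p\to 0,
\]
which as a by-product confirms that $\K|_{C_1}$ is locally free on $C_1$: it is the elementary transformation of $\E|_{C_1}$ upward at $p$ by the subspace $E'=\T|_p\subset \E|_p$.

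Next I would dualize. Using $\hom(\T|_p,\O_{C_1})=0$, the local-duality identification $\ext^1(\T|_p,\O_{C_1})\cong\T|_p$, and the local freeness of $\K|_{C_1}$, the long exact sequence for $\hom(-,\O_{C_1})$ yields
\[
0\to \K|_{C_1}^\vee\to \E|_{C_1}^\vee\to \T|_p\to 0,
\]
in which the right-hand map factors through evaluation at $p$ followed by the quotient $\E|_p^\vee\twoheadrightarrow \E|_p^\vee/(E')^\perp$. The crucial step is then to show that $H^0(C_1,\K|_{C_1}^\vee)=0$. By Construction~\ref{cons}(1) the bundle $\F$ is positive, so $H^0(\E|_{C_1}^\vee)=H^0(\O)\oplus H^0(\F^\vee)\cong\kk$, generated by the constant section whose value at $p$ is exactly $E^\perp\subset\E|_p^\vee$. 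This constant would lift to $H^0(\K|_{C_1}^\vee)$ if and only if $E^\perp\subset (E')^\perp$, i.e.\ $E'\subset E$, which is ruled out by the spanning hypothesis Construction~\ref{cons}(3). Hence $H^0(\K|_{C_1}^\vee)=0$, which on $\PP^1$ forces every summand of $\K|_{C_1}^\vee$ to have degree $\le -1$, equivalently every summand of $\K|_{C_1}$ to have degree $\ge 1$.

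The main obstacle I anticipate is the local Tor/restriction computation near the node: one must verify that no spurious skyscraper contribution appears beyond the rank-$k$ copy of $\T|_p$ and that the $\Tor_1$ term genuinely vanishes, so that the sequence $0\to\E|_{C_1}\to\K|_{C_1}\to\T|_p\to 0$ is correct and $\K|_{C_1}$ is indeed locally free on the smooth curve $C_1$. Once this first step is pinned down, the rest is a clean dimension count of global sections on $\PP^1$.
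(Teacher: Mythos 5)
Your proof is correct and follows essentially the same route as the paper: restrict to $C_1$, identify $\K|_{C_1}$ as the elementary transformation of $\E|_{C_1}$ at $p$ determined by $E'$, and use the spanning condition of Construction~\ref{cons}(3) to conclude that $\K|_{C_1}$ is ample, whence $h^1$ vanishes for degree reasons. The only difference is that you spell out, via $H^0$ of the dual sequence, why $E'\not\subset E$ forces ampleness --- a step the paper asserts without detail --- and your Tor computation at the node is the check the paper performs by dualizing $(\ref{undual})$ instead.
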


\proof Restricting the short exact sequence (\ref{dual}) to $C_1$, we get 
\[
\K^\vee|_{C_1}  \to \E^\vee|_{C_1} \to \T^\vee|_{C_1} \to 0.
\]
The above sequence is also left exact. Indeed, since $\T\otimes_{\O_{C_2}}\O_{C_2}(C_2)|_{C_1}$ is torsion, by restricting (\ref{undual}) to $C_1$ and taking the dual over $C_1$,  we have the injection from $\K^\vee|_{C_1}$ to $\E^\vee|_{C_1}$. 

In other words, the vector bundle $\K|_{C_1}$ is the elementary transform of $\E|_{C_1}$ along $p$ with the specific subspace $E'$. By condition (3) of the construction, $E'$ does not lie in $\F$ at $p$. This implies that $\K$ is ample on $C_1$. The statement follows.

\begin{proposition}\label{prop:positivity}
With the same notations and constructions as above, the restriction of $\E$ to a general fiber $\cC_t$ is positive.
\end{proposition}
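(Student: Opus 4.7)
The strategy is to deduce positivity of $\E$ restricted to a general smooth fiber $\cC_t \cong \PP^1$ from the two vanishing Lemmas \ref{teeth+} and \ref{handle+} for $\K$ via upper semicontinuity of cohomology. The first step is to reduce the question from $\E$ to $\K$: the cokernel of $\E \hookrightarrow \K$ in (\ref{undual}) is an $\cO_{C_2}$-module, so its support lies in $C_2 \subset \cC_0$. Since $\cC_t \cap \cC_0 = \emptyset$ for $t \neq 0$, tensoring (\ref{undual}) with $\cO_{\cC_t}$ gives $\E|_{\cC_t} \cong \K|_{\cC_t}$ (the potential $\Tor_1$ obstruction vanishes for the same support reason). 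It therefore suffices to prove that $\K|_{\cC_t}$ is ample. Before invoking semicontinuity I would note that $\K$ is locally free on $\cC$: in (\ref{dual}), $\K^{\vee}$ is the kernel of a surjection of locally free sheaves onto a torsion sheaf on the smooth surface $\cC$, hence reflexive, hence locally free, and so $\K(-s_1 - s_2)$ is flat over $T$.

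Since $\cC_t \cong \PP^1$, ampleness of $\K|_{\cC_t}$ is equivalent to the vanishing $H^1(\cC_t, \K|_{\cC_t}(-s_1(t) - s_2(t))) = 0$. By upper semicontinuity of cohomology in the flat family $\K(-s_1 - s_2)$ over $T$, it is enough to prove
\[
H^1(C, \K|_C(-y_1 - y_2)) = 0.
\]
I would establish this via the normalization sequence for the nodal curve $C = C_1 \cup_p C_2$ (using $y_1, y_2 \in C_1 \setminus \{p\}$):
\[
0 \to \K|_C(-y_1 - y_2) \to \K|_{C_1}(-y_1 - y_2) \oplus \K|_{C_2} \to \K|_p \to 0,
\]
and read off the conclusion from the long exact sequence of cohomology. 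The required ingredients are: (a) $H^1(C_1, \K|_{C_1}(-y_1 - y_2)) = 0$, which is exactly Lemma \ref{handle+}; (b) $H^1(C_2, \K|_{C_2}) = 0$, which follows from Lemma \ref{teeth+} via the surjection $H^1(C_2, \K|_{C_2}(-p)) \twoheadrightarrow H^1(C_2, \K|_{C_2})$ coming from $0 \to \K|_{C_2}(-p) \to \K|_{C_2} \to \K|_p \to 0$; and (c) surjectivity of the fiber evaluation $H^0(C_2, \K|_{C_2}) \to \K|_p$, also immediate from Lemma \ref{teeth+}.

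All the genuine content sits in Lemmas \ref{teeth+} and \ref{handle+}, which repackage conditions (1)--(3) of Construction \ref{cons} (the positivity of $\F$ and $\T$ together with the transversality $E + E' = \E|_p$) into sharp vanishings for the elementary transform $\K$. The remainder is routine: the normalization-plus-semicontinuity bookkeeping promotes componentwise vanishing on the nodal central fiber to ampleness on the generic fiber. No serious difficulty is expected beyond verifying $\Tor$-vanishing for the restriction of (\ref{undual}) to $\cC_t$ and the surjectivity in (c), both of which follow mechanically from the established setup.
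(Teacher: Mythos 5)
Your proposal is correct and follows essentially the same route as the paper: reduce from $\E$ to $\K$ using that the cokernel of (\ref{undual}) is supported on $C_2$, use local freeness and flatness of $\K$ plus upper semicontinuity to reduce to $h^1(C,\K|_C(-y_1-y_2))=0$, and deduce that vanishing from Lemmas \ref{teeth+} and \ref{handle+}. The only cosmetic difference is that you glue the two lemmas via the normalization sequence (which costs you the extra, easy surjectivity check onto $\K|_p$), whereas the paper's intended restriction sequence $0 \to \K|_{C_2}(-p) \to \K|_C \to \K|_{C_1} \to 0$ gives the vanishing directly.
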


\begin{proof} By the construction, we know $\K|_{\cC_t}$ is isomorphic to $\E|_{\cC_t}$. Since $\K$ is locally free on $\cC$, it is flat over $T$. By upper semicontinuity, it suffices to show that $h^1(C,\K(-y_1-y_2))=0$. This follows from the above two lemmas and the restrictions of the short exact sequence.
\end{proof}



\end{document}